\definecolor{blue}{rgb}{0,0,0.9}
\definecolor{red}{rgb}{0.9,0,0}
\definecolor{green}{rgb}{0,0.9,0}
\newtheorem{assumption}{Assumption}
\newtheorem{lemma}{Lemma}
\newtheorem{theorem}{Theorem}
\def\inprod#1#2{\langle#1,#2\rangle}
\def\cS{{\cal S}}
\newcommand{\bQ}{\boldsymbol Q}
\newcommand{\bL}{\boldsymbol L}
\newcommand{\bv}{\boldsymbol v}
\newcommand{\R}{\mathbb R}
\newcommand{\N}{\mathbb N}
\newcommand{\K}{\ensuremath{\mathbf K}}
\newcommand{\A}{\mathbf A}
\newcommand{\B}{\mathbf B}
\newcommand{\M}{\mathbf M}
\newcommand{\y}{\boldsymbol y}
\newcommand{\x}{\mathbf x}
\newcommand{\n}{\mathbf n}
\newcommand{\oo}{\mathbf o}
\newcommand{\ms}{\mathcal{S}}
\newcommand{\blambda}{\boldsymbol \lambda}
\newcommand{\btheta}{\boldsymbol \theta}
\newcommand{\dmax}{\ensuremath{d_{\text{max}}}}
\newcommand{\fl}{\ensuremath{f^\ell}}
\newcommand{\cf}{\ensuremath{\mathrm{f}}}
\newcommand{\supp}{\mathrm{supp}}
\title{Sparse-BSOS: a bounded degree SOS hierarchy for large scale polynomial optimization with sparsity
\thanks{
The work of the first author is partially supported by a PGMO grant from {\it Fondation Math{\'e}matique Jacques Hadamard} and a grant from the {\it ERC council} for the Taming project (ERC-Advanced Grant \#666981 TAMING).
 }
}
\author{
Tillmann Weisser\thanks{LAAS-CNRS, University of
Toulouse, LAAS, 7 Avenue du Colonel Roche, 31031 Toulouse c\'{e}dex 4, France ({\tt tweisser@laas.fr}).}
\
Jean B. Lasserre\thanks{LAAS-CNRS and Institute of Mathematics, University of
Toulouse, LAAS, 7 Avenue du Colonel Roche, 31031 Toulouse c\'{e}dex 4, France ({\tt lasserre@laas.fr}).}
 \
 and Kim-Chuan Toh\thanks{Department of Mathematics, National University of Singapore, 
        10 Lower Kent Ridge Road, Singapore 119076 ({\tt mattohkc@nus.edu.sg}).} 
}
\begin{document}
\maketitle
\begin{abstract}
We provide a sparse version of the bounded degree SOS hierarchy BSOS \cite{bsos} for polynomial optimization problems. It permits to treat large scale problems which satisfy a structured sparsity pattern. When the sparsity pattern satisfies the running intersection property this Sparse-BSOS hierarchy of semidefinite programs (with semidefinite constraints of fixed size) converges to the global optimum of the original problem. Moreover, for the class of SOS-convex problems, finite convergence takes place at the first step of the hierarchy, just as in the dense version. 
\end{abstract}

\noindent\textbf{Keywords:} Global Optimization, Semidefinite Programming, Sparsity, Large Scale Problems, Convex Relaxations, Positivity Certificates

\noindent\textbf{MSC:}  90C26, 90C22
\section{Introduction}

We consider the polynomial optimization problem:
\begin{equation}
\label{def-pb}
\quad f^*:=\displaystyle\min_\x\:\{f(\x)\::\: \x\in\K\:\} \tag{P}
\end{equation}
where $f\in\R[\x]$ is a polynomial and $\K\subset\R^n$ is the basic semi-algebraic set
\begin{equation}
\label{setk}
\K:=\{\x\in\R^n: g_j(\x)\geq0,j=1,\ldots,m\},
\end{equation}
for some polynomials $g_j\in\R[\x]$, $j=1,\ldots,m$. In \cite{bsos} Lasserre et al. have provided BSOS, a new hierarchy of semidefinite programs $(\bQ^k_d)$ indexed by $d\in\N$ and parametrized by $k\in\N$ (fixed), whose associated (monotone non-decreasing) sequence of optimal values $(\rho^k_d)_{d\in\N}$ converges to $f^*$ as $d\to\infty$, i.e., $\rho^k_d\to f^*$ as  $d\to\infty$. 

One distinguishing feature of the BSOS hierarchy (when compared to the LP-hierarchy defined in \cite{Lasserre02a}) is {\it finite convergence} for an important class of convex problems. That is, when $f,-g_j$ are SOS-convex polynomials of degree bounded by $2k$, then the first semidefinite relaxation of the hierarchy $(\bQ^k_d)_{d\in\N}$, is exact, i.e., $\rho^k_1=f^*$. (In contrast the LP-hierarchy cannot converges in finitely many steps for such convex problems).

The BSOS hierarchy has also two other important distinguishing features, now when compared to the standard SOS hierarchy defined in \cite{lasserre-siopt,lasserre-imperial,lasserre-cambridge} (let us call it PUT as it is based on Putinar's Positivstellensatz). 
\begin{itemize}
\item For each semidefinite relaxation $\bQ^k_d$, $d\in\N$, the size of the semidefinite constraint is $O(n^k)$, hence fixed and controlled by the parameter $k$ (fixed and chosen by the user). With $k=0$ one retrieves the LP-hierarchy based on a positivity certificate due to Stengle; see \cite{bsos} and \cite{Lasserre02a} for more details. 
\item For the important class of quadratic/quadratically constrained problems, the first relaxation of the BSOS hierarchy is at least as good as the first relaxation of the PUT hierarchy.
\end{itemize}
 
In this paper we introduce a sparse version of the BSOS hierarchy to help solve large scale polynomial optimization problems that exhibit some structured sparsity pattern. This hierarchy has exactly the same advantages as BSOS, now when compared with the sparse version of the standard SOS hierarchy introduced in Waki et al. \cite{waki}.

\subsection*{Motivation}
In general the standard SOS hierarchy PUT \cite{lasserre-siopt} is considered very efficient (and hard to beat) when it can be implemented. For instance, PUT's first relaxation solves at optimality several small size instances of the Optimal Power Flow problem (OPF), an important problem in management of energy networks (essentially a quadratic/quadratically constrained optimization problem); see e.g. \cite{molzahn2}. But even for relatively small size OPFs, PUT's second relaxation cannot be implemented because the size of some matrices required to be positive semidefinite (psd) is too large for state-of-the art semidefinite solvers.  

This was an important motivation for introducing the BSOS hierarchy \cite{bsos}: That is, to provide an alternative to PUT, especially in cases where PUT's second relaxation cannot be implemented. In particular, in {\it all} relaxations of the BSOS hierarchy (with parameter $k=1$) the size of the matrix required to be psd is only $O(n)$ (in contrast to $O(n^2)$ for PUT's second relaxation). Thus for quadratic/quadratically constrained problems where PUT's second relaxation cannot be implemented, the second and even higher order relaxations of BSOS can be implemented and provide better lower bounds.
\footnote{Recently Marandi et al. \cite{marandi} have shown  that BSOS provides better bounds than the first relaxation of
PUT for the Pooling Problem - another instance of quadratic/quadratically constrained problems. However their
experiments also show that BSOS (i) does not always solve these difficult problems to optimality
at a low level \enquote{$d$} relaxation and (ii) does not scale well.} 
When they are strictly less than the optimal value these lower bounds might still be useful as they can be exploited in some other procedure.

The motivation for introducing the sparse version of BSOS is exactly the same as for BSOS, but now for large scale polynomial optimization problems that exhibit some sparsity pattern. For such problems the sparse version of the SOS hierarchy introduced in Waki et al. \cite{waki} (that we call \enquote{Sparse-PUT} in the sequel) is in general very efficient, in particular when its second relaxation can be implemented. So there is a need to provide an alternative to the latter, especially in cases where its second relaxation cannot be implemented -- for instance for some large scale OPF problems (where indeed the second relaxation cannot be implemented, at least in its initial form), e.g as described in \cite{molzahn}.

\subsection*{Contribution}

Even though in the BSOS hierarchy \cite{bsos} the size $O(n^k)$ of the semidefinite constraint of $\bQ^k_d$ is fixed for all $d$ and permits to handle problems $(P)$ of size larger than with the standard SOS-hierarchy, its application is still limited to problems of relatively modest size (say medium size problems). Our contribution is to provide a {\it sparse} version \enquote{Sparse-BSOS} of the BSOS hierarchy which permits to handle large size problems $(P)$ that satisfy some (structured) sparsity pattern.
The Sparse-BSOS hierarchy is to its dense version BSOS what the Sparse-PUT hierarchy is to the standard SOS-hierarchy PUT.
Again as in the dense case, a distinguishing feature of the Sparse-BSOS hierarchy (and in contrast to Sparse-PUT) is that the size of the resulting semidefinite constraints is {\it fixed in advance} at the user convenience and does {\it not} depend on the rank in the hierarchy. However, such an extension is not straightforward because in contrast to Putinar's SOS-based certificate \cite{Putinar93} (where the $g_j$'s appear separately), the positivity certificate used in the dense BSOS algorithm \cite{bsos} potentially mixes all polynomials  $g_j$ that define $\K$, that is, 
if $f$ is positive on $\K$ then
\begin{equation}
\label{bsos-certificate}
f\,=\,\sigma+\sum_{\alpha,\beta\in(\N_0)^m}c_{\alpha\beta}\prod_{j=1}^m g_j^{\alpha_j}(1-g_j)^{\beta_j},
\end{equation}
for some SOS polynomial $\sigma$ and positive scalar weights $c_{\alpha\beta}$. Therefore in principle the sparsity as defined in \cite{waki} may be destroyed in $\sigma$ and in the products $\prod_jg_j^{\alpha_j}(1-g_j)^{\beta_j}$. In fact, one contribution of this paper is to provide a specialized sparse version of \eqref{bsos-certificate}. In particular, we prove that if the sparsity pattern satisfies the so-called {\it Running Intersection Property} (RIP) then the Sparse-BSOS hierarchy also converges to the global optimum $f^*$ of $(P)$. A sufficient rank-condition also permits to detect finite convergence. Last but not least, we also prove that the Sparse-BSOS hierarchy preserves two distinguishing features of the dense BSOS hierarchy.
Namely:
\begin{itemize}
\item Finite convergence at the first step of the hierarchy for the class of SOS-convex problems. (Recall that the standard LP hierarchy cannot converge in finitely many steps for such problems \cite{Lasserre02a,lasserre-cambridge}.)
\item For quadratic/quadratically constrained problems (with a sparsity pattern), the first relaxation of the Sparse-BSOS hierarchy is at least as good as that of Sparse-PUT.
Hence when Sparse-PUT's second relaxation cannot be implemented the Sparse-BSOS hierarchy (with parameter $k=1$) will provide better lower bounds.
\end{itemize}
This also suggests that the Sparse-BSOS relaxations could be useful in Branch \& Bound algorithms for solving large scale Mixed Integer Non Linear programs (MINLP). Indeed for such algorithms the quality of lower bounds computed at each node of the search tree is crucial for the overall computational efficiency.

\paragraph{The sparsity pattern.} Roughly speaking we say that problem \eqref{def-pb} satisfies a structured sparsity pattern, if the set $I_0:=\{1,\ldots,n\}$ of all (indices of) variables is some union $\cup_{\ell=1}^p I_k$ of smaller blocks of (indices of) variables $I_\ell$ such that each monomial of the objective function only consists of variables in one of the blocks. In addition, each polynomial  $g_j$ in the definition \eqref{setk} of the feasible set, is also a polynomial only in variables of one of the blocks. Of course the blocks $I_1,\ldots,I_p$ may overlap, i.e., variables may appear in several blocks. Together with the maximum degree appearing in the data of (P), the number and size of the blocks $I_1,\ldots,I_p$ as well as the size of their overlaps, are the characteristics of the sparsity pattern which have the strongest influence on the performance of our algorithm.

\paragraph{Computational experiments.} We have tested the Sparse-BSOS hierarchy against both its dense version BSOS and Sparse-PUT \cite{waki}. To compare Sparse-BSOS and BSOS we consider problems of small and medium size ($\leq 100$ variables) and different sparsity patterns. Also we show that Sparse-BSOS can solve sparse large scale problems with up to $3000$ variables which by far is out of the scope of the dense version in a reasonable time on a standard lap-top.\footnote{The numerical experiments were run on a standard lap-top from the year 2015. For a more detailed description see page \pageref{computer}.}

To compare Sparse-BSOS and Sparse-PUT we have considered several problems from the literature in non-linear optimization and random medium size non-linear problems. For several such problems the first or second relaxation of the Sparse-PUT hierarchy cannot be solved whereas the first Sparse-BSOS relaxations $d=1,2\ldots$ with a small parameter $k=1$ or $2$ can\footnote{If in the description \eqref{def-pb} some degree \enquote{s} is strictly larger than $2$ then for Sparse-PUT's first relaxation, the size of the largest positive semidefinite variable is already at least $O(\kappa^{\lceil s/2\rceil})$ where $\kappa$ is the size of the largest clique in some graph associated with the problem; if $\kappa$ is not small it can be prohibitive. In contrast, for all the Sparse-BSOS relaxations (with parameter $k=1$), this largest size is only $O(\kappa)$.}. Therefore in such cases a good approximation (or even the optimal value) can be obtained by the Sparse-BSOS hierarchy. 
In our numerical experiments the problems were chosen in such a way that only the first relaxation of Sparse-PUT can be implemented, and indeed for such cases the Sparse-BSOS relaxations provide better lower bounds (and sometimes even the optimal value).

\section{Preliminaries}

\subsection{Notation and definitions}
Let $\R[\x]$ be the ring of polynomials in the variables $\x=(x_1,\ldots,x_n)$. Denote by $\R[\x]_d\subset\R[\x]$ the vector space of polynomials of degree at most $d$, which has dimension $s(d):=\binom{n+d}{d}$, with e.g., the usual canonical basis $(\x^\gamma)_{\gamma\in\N^n_d}$ of monomials, where $\N^n_d := \{\gamma\in(\N_0)^n \,:\, |\gamma|\leq d\}$, $\N_0$ is the set of natural numbers including $0$ and $|\gamma| := \sum_{i=1}^n\gamma_i$. Also, denote by $\Sigma[\x]\subset\R[\x]$ (resp. $\Sigma[\x]_d\subset\R[\x]_{2d}$) the cone of sums of squares (SOS) polynomials (resp. SOS polynomials of degree at most $2d$). 
If $f\in\R[\x]_d$, we write $f(\x)=\sum_{\gamma\in\N^n_d}\cf_\gamma \x^\gamma$ in the canonical basis and denote by $\boldsymbol{\cf}=(\cf_\gamma)_\gamma\in\R^{s(d)}$ its vector of coefficients. Finally, let $\ms^n$ denote the space of $n\times n$ real symmetric matrices, with inner product $\langle \A,\B\rangle ={\rm trace}\,\A\B$. We use the notation $\A\succeq 0$ to denote that $\A$ is positive semidefinite (psd). With $g_0:=1$, the quadratic module $Q(g_1,\ldots,g_m)\subset\R[\x]$ generated by polynomials $g_1,\ldots,g_m$, is defined by
\[
Q(g_1,\ldots,g_m)\,:=\,\left\{\sum_{j=0}^m\sigma_j\,g_j\::\:\sigma_j\in\Sigma[\x]\,\right\}.
\]
With a real sequence $\y=(y_{\gamma})_{\gamma\in\N^n_d}$, one may associate the linear functional $L_{\y}:\R[\x]_d\to\R$ defined by
\[f\,\left(=\sum_\gamma \cf_\gamma\,\x^\gamma\right)\quad \mapsto L_{\y}(f)\,:=\,\sum_\gamma \cf_\gamma\,y_\gamma,\]
which is called the Riesz functional.
If $d=2a$ denote by $\M_a(\y)$ the moment matrix associated with $\y$. It is a real symmetric matrix with rows and columns indexed in the basis of monomials $(\x^\gamma)_{\gamma\in \N^n_{a}}$, and with entries
\[
\M_a(\y)(\alpha,\beta)\,:=\,L_{\y}(\x^{\alpha+\beta})\,=\,y_{\alpha+\beta},\qquad \forall\,\alpha,\beta\in\N^n_{a}.
\]
If $\y=(y_{\gamma})_{\gamma\in\N^n}$ is the sequence of moments of some Borel measure $\mu$ on $\R^n$ then $\M_a(\y)\succeq0$ for all $a\in\N$. However the converse is not true in general and it is related to the well-known fact that there are positive polynomials that are not sums of squares.
For more details the interested reader is referred to e.g. \cite[Chapter 3]{lasserre-imperial}.

A polynomial $f\in\R[\x]$ is said to be SOS-convex if its Hessian matrix $\x\mapsto \nabla^2f(\x)$ is an SOS matrix-polynomial, that is, $\nabla^2f=\bL\,\bL^T$ for some real matrix polynomial $\bL\in\R[\x]^{n\times a}$ (for some integer $a$). In particular, for SOS-convex polynomials and sequences $\y$ with positive semidefinite moment matrix $\M_a(\y)\succeq0$, a Jensen-type inequality is valid:

\begin{lemma}
\label{lemma-jensen}
Let $f\in\R[\x]_{2a}$ be SOS-convex and let $\y=(y_{\gamma})_{\gamma\in\N^n_{2a}}$ be such that
$y_0=1$ and $\M_a(\y)\succeq0$. Then
\[L_{\y}(f)\,\geq\,f(L_{\y}(\x)),\quad\mbox{with}\quad L_{\y}(\x)\,:=\,(L_{\y}(x_1),\ldots,L_{\y}(x_n)).\]
\end{lemma}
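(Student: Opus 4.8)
The plan is to apply the Riesz functional $L_{\y}$ to the exact Taylor expansion of $f$ about the point $\z:=L_{\y}(\x)=(L_{\y}(x_1),\dots,L_{\y}(x_n))\in\R^n$ and to show that the contribution of the second‑order remainder is nonnegative. Since $y_0=1$ we have $L_{\y}(1)=1$ and $L_{\y}(x_i)=z_i$ for every $i$, so $L_{\y}$ applied to the affine polynomial $\x\mapsto f(\z)+\nabla f(\z)^T(\x-\z)$ equals exactly $f(\z)+\nabla f(\z)^T(L_{\y}(\x)-\z)=f(\z)$. Hence it suffices to show $L_{\y}(p)\geq 0$ for the remainder $p\in\R[\x]_{2a}$ defined by $p(\x):=f(\x)-f(\z)-\nabla f(\z)^T(\x-\z)$.

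The key step is the integral form of the remainder given by Taylor's theorem:
\[
p(\x)\ =\ \int_0^1 (1-t)\,(\x-\z)^T\,\nabla^2 f\big(\z+t(\x-\z)\big)\,(\x-\z)\,dt .
\]
This is where SOS‑convexity enters: writing $\nabla^2 f=\bL\bL^T$ with $\bL$ a real matrix polynomial (necessarily of degree at most $a-1$, since $\nabla^2 f$ has degree at most $2a-2$), for each fixed $t\in[0,1]$ the integrand equals $(1-t)\,\big\|\bL(\z+t(\x-\z))^T(\x-\z)\big\|^2$, which as a polynomial in $\x$ is a nonnegative multiple of a sum of squares of polynomials of degree at most $a$, hence lies in $\Sigma[\x]_a$. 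For any $\sigma=\sum_i q_i^2\in\Sigma[\x]_a$ one has $L_{\y}(\sigma)=\sum_i \boldsymbol q_i^T\M_a(\y)\,\boldsymbol q_i\geq 0$ because $\M_a(\y)\succeq 0$, where $\boldsymbol q_i$ denotes the coefficient vector of $q_i$.

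It then remains to push $L_{\y}$ through the integral. Since $L_{\y}$ is a fixed finite linear combination of coefficient‑extraction functionals on $\R[\x]_{2a}$, and the coefficients of the integrand depend polynomially (in particular continuously) on $t$, this interchange is immediate and gives
\[
L_{\y}(p)\ =\ \int_0^1 (1-t)\;L_{\y}\!\Big((\x-\z)^T\,\nabla^2 f(\z+t(\x-\z))\,(\x-\z)\Big)\,dt\ \geq\ 0 ,
\]
the integrand being nonnegative by the previous paragraph. Combining with the first step yields $L_{\y}(f)=f(\z)+L_{\y}(p)\geq f(\z)=f(L_{\y}(\x))$, as claimed.

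The only point requiring genuine care is the degree bookkeeping: one must check that $\deg\bL\le a-1$ (so that every square occurring in the Taylor remainder has degree at most $2a$), after which positivity of $L_{\y}$ on those squares is exactly what the hypothesis $\M_a(\y)\succeq 0$ provides. The rest — the Taylor identity, the interchange with the integral, and the linear‑term cancellation — is routine.
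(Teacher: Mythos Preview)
Your proof is correct. The paper does not actually prove this lemma; it simply cites Theorem~13.21 in \cite{lasserre-cambridge}. Your argument---Taylor expansion about $\z=L_{\y}(\x)$, integral form of the remainder, and the observation that SOS-convexity makes the integrand a sum of squares of degree at most $2a$ so that $\M_a(\y)\succeq 0$ forces $L_{\y}$ of it to be nonnegative---is precisely the standard route and is essentially what the cited reference does. The degree bookkeeping you flag (that $\deg\bL\le a-1$, deduced from the diagonal of $\bL\bL^T=\nabla^2 f$ having degree $\le 2a-2$) is indeed the one point to verify, and your justification is sound.
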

For a proof see Theorem 13.21, p. 209 in \cite{lasserre-cambridge}.

\subsection{A sparsity pattern}
Given $I\subset\{1,\ldots,n\}$ denote by $\R[\x;I]$ the ring of polynomials in the variables $\{x_i: i\in I\}$, which we understand as a subring of $\R[\x]$. Hence, a polynomial $g\in\R[\x;I]$ canonically induces two polynomial functions $g:\R^{\#I}\to \R$ and $g:\R^n\to \R$, where $\#I$ is the number of elements in $I$.

\begin{assumption}[Sparsity pattern]\label{assumption1}
There exists $p\in\N$ and subsets $I_\ell\subseteq\{1,\ldots,n\}$ and $J_\ell\subseteq\{1,\ldots,m\}$ for all $\ell \in \{1,\ldots,p\}$ such that
\begin{itemize}
\item $f=\sum_{\ell=1}^p f^\ell$, for some $f^1,\ldots, f^p$ such that $f^\ell\in\R[\x;I_\ell]$, $\ell\in\{1,\ldots,p\}$,
\item $g_j\in \R[\x;I_\ell]$ for all $j\in J_\ell$ and $\ell\in\{1,\ldots,p\}$,
\item $\bigcup_{\ell=1}^p I_\ell = \{1,\ldots,n\}$,
\item $\bigcup_{\ell=1}^p J_\ell = \{1,\ldots,m\}$;
\item for all $\ell = 1,\ldots,p-1$ there is an $s\leq \ell$ such that $\left(I_{\ell+1}\cap\bigcup_{r=1}^\ell I_r\right) \subseteq I_s$  (Running Intersection Property).
\end{itemize}
\end{assumption}

\paragraph{On the RIP.} The RIP can be understood in the following framework. Suppose that we are given probability measures $(\mu_\ell)_{\ell=1,\ldots,p}$, where each $\mu_\ell$ only {\it sees} the variables $\{x_i:i\in I_\ell\}$, and the $\mu_\ell$'s are consistent in the sense that if $I_{\ell,k}:=I_\ell\cap I_k\neq\emptyset$ then $\mu_{\ell,k}=\mu_{k,\ell}$, where $\mu_{\ell,k}$ (resp. $\mu_{k,\ell}$) is the marginal of $\mu_\ell$ (resp. $\mu_k$) with respect to the variables $\{x_i:i\in I_{\ell,k}\}$. If the RIP property holds then from the $\mu_\ell$'s one is able to build up a probability measure $\phi$ that sees {\it all} variables $x_1,\ldots,x_n$, and such that for every $\ell=1,\ldots,p$, the marginal  $\phi_\ell$ of $\phi$  with respect to the variables $\{x_i:i\in I_\ell\}$ is exactly $\mu_\ell$. Put differently, the \enquote{local} information $(\mu_\ell)$ is part of a \enquote{consistent and global} information $\phi$, without knowing $\phi$. In this set up 
the RIP condition appears naturally when one tries to build up $\phi$ from the $\mu_\ell$'s; see e.g. the proof in Lasserre \cite{lasserre-sparse}.

Note that if the sets $I_1,\ldots,I_p$ satisfy all requirements of Assumption \ref{assumption1} except the Running Intersection Property (RIP) it is always possible to enlarge the $I_\ell$ until the RIP is satisfied. In the worst case however $I_\ell = \{1,\ldots,n\}$ which is not interesting. In \cite{waki} no assumption on the sparsity is made. Instead Waki et al. provide an algorithm to create a sparsity pattern satisfying Assumption \ref{assumption1} from any POP. They consider a graph with nodes $\{1,\ldots,n\}$ which has an edge $(i,j)$ if the objective function has a monomial involving $x_i$ and $x_j$ or there is a constraint involving $x_i$ and $x_j$. If this graph is chordal, the sets $I_\ell$ correspond to the maximum cliques of this graph (and hence we sometimes call the $I_\ell$ \enquote{cliques}). If the graph is not chordal it is replaced by its chordal extension.

Finally and importantly, even if the RIP property does not hold, the Sparse-PUT or Sparse-BSOS 
hierarchies can still be implemented and provide valid lower bounds on the global optimum $f^*$.
However convergence of the lower bounds to $f^*$ is not guaranteed any more.

\subsection{A preliminary result}
For the uninitiated reader we start this section by citing two important Positivstellens\"atze. For polynomials 
$g_1,\ldots,g_m\in\R[\x]$ and $\alpha,\beta\in (\N_0)^m$,  let $h_{\alpha\beta}\in\R[\x]$ be the polynomial:
\[
\x\mapsto\quad h_{\alpha\beta}(\x)\,:=\,\prod_{j=1}^m g_j(\x)^{\alpha_j}(1-g_j(\x))^{\beta_j},\qquad\x\in\R^n.
\]

\begin{lemma}[Krivine/Stengle/Vasilescu/Handelmann Positivstellensatz \cite{Krivine64a,Stengle74,vasilescu,Handelman88}]\label{stengle}
Let $f$, $g_1,\ldots,g_m\in\R[\x]$ and $\K=\,\{\x\in\R^n: 0\leq g_j(\x)\leq 1,\quad j=1,\ldots,m\,\}$ be compact. If the polynomials $1$ and $(g_j)_{j=1\ldots,m}$ generate $\R[\x]$ as an $\R$-algebra and $f$ is (strictly) positive on $\K$, then there exist finitely many positive weights $c_{\alpha\beta}$ such that:
\[
f = \sum_{\alpha,\beta\in(\N_0)^m}c_{\alpha\beta}\,h_{\alpha\beta}.
\]
\end{lemma}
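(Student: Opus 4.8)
\medskip
\noindent\textbf{Proof proposal.} The plan is to derive Lemma~\ref{stengle} from the representation theorem for archimedean semirings (the Kadison--Dubois theorem; see e.g.\ \cite{lasserre-cambridge}), applied to the concrete semiring attached to the $g_j$'s. Set
\[
T\,:=\,\Big\{\,\textstyle\sum_{\alpha,\beta\in(\N_0)^m}c_{\alpha\beta}\,h_{\alpha\beta}\ :\ c_{\alpha\beta}\geq 0,\ \text{finitely many }c_{\alpha\beta}\neq0\,\Big\}\ \subseteq\ \R[\x].
\]
The first, routine, task is to check that $T$ is a semiring containing $\R_{\geq0}$: closure under addition and under scaling by nonnegative reals is clear, $\R_{\geq0}\subseteq T$ since $h_{00}=1$, and closure under multiplication follows from $h_{\alpha\beta}\,h_{\alpha'\beta'}=h_{\alpha+\alpha',\,\beta+\beta'}$. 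Next one identifies the character space of $T$: an $\R$-algebra homomorphism $\R[\x]\to\R$ is evaluation at a point of $\R^n$, and it sends $T$ into $\R_{\geq0}$ exactly when it evaluates each $g_j$ and each $1-g_j$ nonnegatively, i.e.\ exactly at the points of $\K=\{0\le g_j\le1\}$. So the character space of $T$ is (identified with) $\K$.

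The crucial step is to show that $T$ is \emph{archimedean}, i.e.\ that for every $f\in\R[\x]$ there is $N\in\N$ with $N-f\in T$ (equivalently $N\pm f\in T$); this is precisely where the hypothesis that $1,g_1,\dots,g_m$ generate $\R[\x]$ as an $\R$-algebra enters. I would first record that for any $\delta\in(\N_0)^m$ both $\prod_j g_j^{\delta_j}$ and $1-\prod_j g_j^{\delta_j}$ lie in $T$: the former equals $h_{\delta,0}$, while for the latter one has $1-g_j^{\delta_j}=(1-g_j)\big(1+g_j+\cdots+g_j^{\delta_j-1}\big)\in T$ and then the telescoping identity
\[
1-\prod_{j=1}^m u_j\ =\ \sum_{j=1}^m\Big(\prod_{i<j}u_i\Big)(1-u_j)
\]
applied with $u_j=g_j^{\delta_j}$ writes $1-\prod_j g_j^{\delta_j}$ as a finite sum of products of elements of $T$. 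Since by hypothesis every $f\in\R[\x]$ is a polynomial in the $g_j$, say $f=\sum_\delta a_\delta\prod_j g_j^{\delta_j}$ with real coefficients $a_\delta$, splitting this sum according to the sign of $a_\delta$ and using $-\prod_j g_j^{\delta_j}=\big(1-\prod_j g_j^{\delta_j}\big)-1$ on the terms with $a_\delta<0$ gives $f+C\in T$ with $C:=\sum_{\delta:\,a_\delta<0}|a_\delta|$, and symmetrically $C'-f\in T$. Hence $T$ is archimedean (and, incidentally, $\K$ is then automatically compact, as assumed).

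With $T$ archimedean and its character space equal to $\K$, the Kadison--Dubois theorem applies: if $f$ is strictly positive on $\K$ --- which, $\K$ being compact, is the same as $f\geq\varepsilon$ on $\K$ for some $\varepsilon>0$ --- then $f\in T$, that is, $f=\sum_{\alpha,\beta}c_{\alpha\beta}h_{\alpha\beta}$ with nonnegative weights $c_{\alpha\beta}$, finitely many of them nonzero, which is the assertion. The main obstacle is the representation theorem itself, whose proof (the Gelfand/Stone--Weierstrass-type density argument passing through a maximal $T$-ideal) is the substantive part; granting it, the only genuine work is the archimedean verification, and there the algebra-generation hypothesis cannot be dropped --- for instance with $m=1$, $g_1=x^2$ and $\K=[-1,1]$ the semiring $T$ consists of even polynomials only, so $f=2+x$, although strictly positive on $\K$, admits no such representation.
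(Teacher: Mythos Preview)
Your proof is correct. Note, however, that the paper does not actually prove Lemma~\ref{stengle}: it is stated as a known Positivstellensatz and attributed to the references \cite{Krivine64a,Stengle74,vasilescu,Handelman88}, so there is no ``paper's own proof'' to compare against.

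That said, your derivation via the Kadison--Dubois representation theorem for archimedean semirings is exactly the standard modern route to this result (and is essentially Krivine's original approach, recast). The key steps --- checking that $T$ is a semiring, identifying its character space with $\K$, and verifying the archimedean property from the algebra-generation hypothesis --- are all handled cleanly. The telescoping argument for $1-\prod_j g_j^{\delta_j}\in T$ is the right trick, and your closing remark that the generation hypothesis is genuinely needed (with the $g_1=x^2$ counterexample) is a nice touch. The only thing one might add for completeness is a precise citation for the version of Kadison--Dubois you invoke (e.g.\ the form in \cite{lasserre-cambridge} or in Marshall's or Prestel--Delzell's monographs), since that theorem is doing the heavy lifting and its hypotheses vary slightly across the literature; but the version for archimedean preprimes containing $\R_{\geq0}$ is standard and applies here without difficulty.
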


We next provide a sparse version of this Positivstellensatz which to the best of our knowledge is new. Before
we recall the sparse version of Putinar's Positivstellensatz \cite{Putinar93} proved in \cite{lasserre-sparse}.

\begin{lemma}[Sparse Putinar Positivstellensatz \cite{kojima}]\label{sparse-putinar}
Let $f,g_1,\ldots,g_m\in\R[\x]$ satisfy Assumption \ref{assumption1} and let the polynomials $1- M_\ell^{-1}\sum_{i\in I_\ell}x_i^2$ for some positive numbers $M_\ell$ be among the $g_j$. If $f$ is (strictly) positive on $\K =\{ \x\in\R^n: g_j(\x) \geq 0,j=1,\ldots,m \},$ then $f=\sum_{\ell=1}^p \fl$ for some polynomials $\fl\in\R[\x;I_\ell]$, $\ell=1,\ldots,p$, and 
\[
\fl \in \left\{\sigma^\ell_0 + \sum_{j\in J_\ell}\sigma^\ell_j\,g_j\::\:\sigma^\ell_0,\sigma^\ell_j\in\Sigma[\x,I_\ell]\,\right\}.
\]
\end{lemma}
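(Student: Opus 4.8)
The plan is to argue by induction on the number $p$ of blocks, the Running Intersection Property being exactly what makes the induction go through; this follows the lines of Lasserre \cite{lasserre-sparse} and Kojima--Muramatsu \cite{kojima} (with the two-block case essentially due to Grimm, Netzer and Schweighofer). For $p=1$ one has $J_1=\{1,\dots,m\}$, $f\in\R[\x;I_1]$ and $\K=\{\x:g_j(\x)\ge0,\ j\in J_1\}$; since the ball constraint $1-M_1^{-1}\sum_{i\in I_1}x_i^2$ is among the $g_j$, the quadratic module generated by $\{g_j:j\in J_1\}$ inside $\R[\x;I_1]$ is Archimedean, so the conclusion is precisely Putinar's Positivstellensatz \cite{Putinar93} applied in the ring $\R[\x;I_1]$.

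For the inductive step I would set $C_1:=\bigcup_{r=1}^{p-1}I_r$, $C_2:=I_p$, $\hat J_1:=\bigcup_{r=1}^{p-1}J_r$, $\hat J_2:=J_p$ and $Z:=C_1\cap C_2$; by the RIP there is $s\le p-1$ with $Z\subseteq I_s$. Write $f=F_1+F_2$ with $F_1:=\sum_{r=1}^{p-1}f^r\in\R[\x;C_1]$ and $F_2:=f^p\in\R[\x;C_2]$. Since $\hat J_1\cup\hat J_2=\{1,\dots,m\}$ and each $g_j$ with $j\in\hat J_i$ lies in $\R[\x;C_i]$, the set $\K$ is the fibre product, over the shared coordinates indexed by $Z$, of the compact sets $\K_1:=\{g_j\ge0,\ j\in\hat J_1\}\subseteq\R^{C_1}$ and $\K_2:=\{g_j\ge0,\ j\in\hat J_2\}\subseteq\R^{C_2}$; both carry Archimedean quadratic modules, since $\K_i$ keeps the ball constraints of the blocks it contains. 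The step then reduces to the two-block claim: there exists $\theta\in\R[\x;Z]$ with $F_1+\theta>0$ on $\K_1$ and $F_2-\theta>0$ on $\K_2$.

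Granting the claim, the induction closes as follows. First, $f^p-\theta=F_2-\theta\in\R[\x;I_p]$ is strictly positive on $\K_2$, whose quadratic module is Archimedean, so Putinar gives $f^p-\theta=\sigma_0^p+\sum_{j\in J_p}\sigma_j^pg_j$ with $\sigma_j^p\in\Sigma[\x;I_p]$. Next, $\theta\in\R[\x;Z]\subseteq\R[\x;I_s]$, so we may absorb it by setting $\tilde f^s:=f^s+\theta$ and $\tilde f^r:=f^r$ for $r<p$, $r\neq s$, so that $F_1+\theta=\sum_{r<p}\tilde f^r$ with $\tilde f^r\in\R[\x;I_r]$, strictly positive on $\K_1$. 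The blocks $I_1,\dots,I_{p-1}$ with families $J_1,\dots,J_{p-1}$ still satisfy Assumption \ref{assumption1} (the RIP of a prefix is part of the RIP of the whole, and each such $I_r$ keeps its ball constraint), so the induction hypothesis applied to $F_1+\theta$ produces $F_1+\theta=\sum_{r<p}\hat f^r$ with $\hat f^r$ in the cone attached to $I_r$; adding $\hat f^p:=f^p-\theta$ yields the desired decomposition of $f$. To prove the two-block claim I would examine the fibrewise envelopes $a(t):=-\min\{F_1(u):u\in\K_1,\ u|_Z=t\}$, upper semicontinuous on the compact projection of $\K_1$ onto the $Z$-coordinates, and $b(t):=\min\{F_2(v):v\in\K_2,\ v|_Z=t\}$, lower semicontinuous on that of $\K_2$. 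For $t$ in the intersection of the two projections, minimizers $u\in\K_1$ and $v\in\K_2$ above $t$ agree on $Z$ and, as $C_1\cup C_2=\{1,\dots,n\}$, glue to a point of $\K$, so $F_1(u)+F_2(v)=f>0$ there, i.e. $a(t)<b(t)$. Since $Z\subseteq C_2$ and $\K_2$'s module is Archimedean, $t$ ranges over a compact box, and an insertion argument producing a continuous function squeezed strictly between $a$ and $b$, followed by a Stone--Weierstrass approximation with enough margin to keep both strict inequalities, yields the required $\theta\in\R[\x;Z]$.

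The step I expect to be the main obstacle is exactly this last construction: turning the pointwise inequality $a<b$ into an honest \emph{polynomial} separator, which calls for some care with the semicontinuity of the fibrewise minima and with not destroying strict positivity on $\K_1$ or $\K_2$ when replacing a continuous separator by a polynomial one. An alternative that avoids an explicit separator is the dual route of \cite{lasserre-sparse}: were $f$ not in the sum of the block cones, one could separate it by a linear functional, realize its restriction to each $\R[\x;I_\ell]$ as the moment sequence of a measure $\mu_\ell$ supported on $\{g_j\ge0:j\in J_\ell\}$ via the Archimedean condition, glue the consistent family $(\mu_\ell)_{\ell}$ along the RIP into a measure on $\K$, and contradict $f>0$ on $\K$.
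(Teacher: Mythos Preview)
The paper does not give its own proof of this lemma; it is stated as a citation from \cite{kojima} (itself building on \cite{lasserre-sparse}) and is used as a black box in the proof of Theorem~\ref{sparse-stengle}. There is therefore nothing in the paper to compare your argument against.

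That said, your sketch is a faithful outline of the proof found in those references. The induction on $p$, with the RIP used to absorb the separator $\theta\in\R[\x;Z]$ into the block $I_s$, is exactly the mechanism of \cite{kojima}, and the two-block step via the semicontinuous fibrewise envelopes $a,b$ followed by a continuous insertion and Stone--Weierstrass approximation is the Grimm--Netzer--Schweighofer argument you cite. One small point worth making explicit: the projections of $\K_1$ and $\K_2$ onto the $Z$-coordinates need not coincide, so in the insertion step one extends $a$ by $-\infty$ off the first projection and $b$ by $+\infty$ off the second, so that $a<b$ holds on a single compact domain before inserting a continuous function; your wording hints at this but does not quite say it. The alternative dual route you describe at the end (separate $f$ from the sum of block cones, realize the separating functional by measures $\mu_\ell$ via Archimedeanity, glue them along the RIP) is precisely Lasserre's original argument in \cite{lasserre-sparse}. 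Both routes are correct, and your identification of the polynomial-separator construction as the delicate point is accurate.
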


From now on, we assume that \K\ is described by polynomials $g_1,\ldots,g_m$ such that 
\begin{equation}\label{setkk}
\K=\,\{\x\in\R^{n}:\:0\leq g_j(\x)\leq 1,\quad j=1,\ldots,m\,\}.
\end{equation}
Note that this is not a restriction when  \K\ defined in \eqref{def-pb} is compact, as the constraint polynomials can be scaled by a positive factor without adding or losing information.
 
Let Assumption \ref{assumption1} hold, define $n_\ell:=\vert I_\ell\vert$, $m_\ell:=\vert J_\ell\vert$, and let $\K_\ell\subset\R^{n_\ell}$, $\ell=1,\ldots,p$,  be the sets:
\begin{equation}
\label{setskell}
\K_\ell\,:=\,\{\x\in\R^{n_\ell}:\:0\leq g_j(\x)\leq 1,\quad j\in J_\ell\,\},\qquad \ell=1,\ldots,p.
\end{equation}

Define $\pi_{\ell}:\R^n\to \R^{n_\ell}, \x \mapsto (x_i)_{i\in I_{\ell}}$. Then
\begin{equation}\label{eq:KKl}
 \K = \{\x\in\R^n\,:\, \pi_{\ell}(\x)\in \K_\ell\mbox{ for all }\ell=1,\ldots,p\}.
\end{equation}

\begin{assumption}\label{assumption2}
\begin{enumerate}[(a)]
\item The sets $\K_\ell$ defined in \eqref{setskell} are compact and the polynomials $1$ and $(g_j)_{j\in J_\ell}$ generate $\R[\x;I_\ell]$ as an $\R$-algebra for each $\ell=1,\ldots,p$.
\item For each $\ell$ there exists $M_\ell > 0$ and $j\in J_\ell$ such that $g_j = 1- M_\ell^{-1}\sum_{i\in I_\ell}x_i^2$.
\end{enumerate}
\end{assumption}

Note that if Assumption \ref{assumption2}(a) holds then Assumption \ref{assumption2}(b) can be satisfied easily. Indeed, since $\K_\ell$ is assumed to be compact, the polynomial $\x\mapsto \sum_{i\in I_\ell}x_i^2$ attains its maximum $M_\ell$ on $\K_\ell$. Defining $\x\mapsto g^\ell_+(\x):=1 - M_\ell^{-1}\sum_{i\in I_\ell}x_i^2$ and adding the redundant constraint $0 \leq g^\ell_+(\x) \leq 1$ to the description of $\K_\ell$ for each $\ell=1,\ldots,p$, Assumption \ref{assumption2}(b) is satisfied.

Let $N^\ell:=\{(\alpha,\beta)\in(\N_0)^{2m}\,:\,\supp(\alpha)\cup\supp(\beta)\subseteq J_\ell \}$, where $\supp(\alpha):=\{j \in \{1,\ldots,m\}\,:\, \alpha_j\neq 0\}$.

\begin{theorem}[Sparse Krivine-Stengle Positivstellensatz]
\label{sparse-stengle}
Let $f,g_1,\ldots,g_m\in\R[\x]$ satisfy Assumption \ref{assumption1} and \ref{assumption2}(a). If $f$ is (strictly) positive on $\K$ then $f=\sum_{\ell=1}^p \fl$ for some polynomials $\fl\in\R[\x;I_\ell]$, $\ell=1,\ldots,p$, and there exists finitely many positive weights $c^\ell_{\alpha\beta}$ such that

\begin{equation}
\label{sparse-stengle-1}
\fl\,=\,\sum_{(\alpha,\beta)\in N^{\ell}}c^\ell_{\alpha\beta}\,h_{\alpha\beta},\qquad \ell=1,\ldots,p.
\end{equation}

\end{theorem}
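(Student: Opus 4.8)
The plan is to combine the sparse Putinar certificate (Lemma \ref{sparse-putinar}) with the dense Krivine-Stengle certificate (Lemma \ref{stengle}), applied clique by clique, exactly as the dense BSOS certificate \eqref{bsos-certificate} is derived from Putinar plus Krivine-Stengle in \cite{bsos}. First I would note that Assumption \ref{assumption2}(a) lets us invoke the remark following Assumption \ref{assumption2}: by adding to each clique the redundant ball constraint $g^\ell_+ = 1 - M_\ell^{-1}\sum_{i\in I_\ell}x_i^2$ (which is bounded between $0$ and $1$ on $\K_\ell$) we may assume Assumption \ref{assumption2}(b) holds as well, and this changes neither $\K$ nor the cliques $I_\ell$, only enlarges the index sets $J_\ell$ by one element. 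So from now on each $J_\ell$ contains an index $j$ with $g_j = g^\ell_+$.

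Next I would apply the sparse Putinar Positivstellensatz (Lemma \ref{sparse-putinar}): since $f$ is strictly positive on $\K$ and Assumptions \ref{assumption1} and \ref{assumption2}(b) hold, we get $f = \sum_{\ell=1}^p \tilde f^\ell$ with $\tilde f^\ell \in \R[\x;I_\ell]$ of the form $\tilde f^\ell = \sigma_0^\ell + \sum_{j\in J_\ell}\sigma_j^\ell g_j$ for SOS polynomials $\sigma_0^\ell,\sigma_j^\ell\in\Sigma[\x,I_\ell]$. The key observation is that $\tilde f^\ell$ need not be positive on $\K_\ell$, but it is an element of the quadratic module $Q((g_j)_{j\in J_\ell})$ inside $\R[\x;I_\ell]$, hence it is nonnegative on $\K_\ell$. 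To get strict positivity I would perturb: fix $\varepsilon>0$ and write $\hat f^\ell := \tilde f^\ell + \varepsilon$, so $\hat f^\ell$ is strictly positive on the compact set $\K_\ell\subset\R^{n_\ell}$; then $\sum_\ell \hat f^\ell = f + p\varepsilon$, and I would absorb the constant $p\varepsilon$ back in at the end, or (cleaner) first replace $f$ by $f-\delta$ for small $\delta>0$ still positive on $\K$ and distribute the slack. Now apply the dense Krivine-Stengle Lemma \ref{stengle} to $\hat f^\ell$ on $\K_\ell$ — this is legitimate precisely because Assumption \ref{assumption2}(a) guarantees $\K_\ell$ compact and $1,(g_j)_{j\in J_\ell}$ generating $\R[\x;I_\ell]$ as an $\R$-algebra — obtaining finitely many positive weights $c^\ell_{\alpha\beta}$, indexed by $(\alpha,\beta)$ supported in $J_\ell$ (i.e. in $N^\ell$), with $\hat f^\ell = \sum_{(\alpha,\beta)\in N^\ell} c^\ell_{\alpha\beta}\, h_{\alpha\beta}$, where each $h_{\alpha\beta}\in\R[\x;I_\ell]$ since only the $g_j$ with $j\in J_\ell$ appear. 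Setting $f^\ell := \hat f^\ell$ (after bookkeeping the $\varepsilon$'s as above so that $\sum_\ell f^\ell = f$ exactly) gives \eqref{sparse-stengle-1}.

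The main obstacle is the strict-positivity gap: the pieces $\tilde f^\ell$ produced by sparse Putinar are only known to be nonnegative on $\K_\ell$, whereas Krivine-Stengle requires strict positivity, so the perturbation argument must be handled carefully to keep the decomposition exact (the constants $\varepsilon$ or $\delta$ have to be reabsorbed without breaking $\sum_\ell f^\ell = f$ or the membership $f^\ell\in\R[\x;I_\ell]$). One clean way is: since $f>0$ on $\K$, pick $\delta>0$ with $f-\delta>0$ on $\K$, apply sparse Putinar to $f-\delta$ to get pieces $\tilde f^\ell$ with $\sum_\ell \tilde f^\ell = f-\delta$, then set $f^\ell := \tilde f^\ell + \delta/p$, which lies in $\R[\x;I_\ell]$, is strictly positive on $\K_\ell$ (as $\tilde f^\ell\ge 0$ there and $\delta/p>0$), satisfies $\sum_\ell f^\ell = f$, and to which Lemma \ref{stengle} applies. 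A minor secondary point to check is that the extra ball constraints $g^\ell_+$ added to enforce Assumption \ref{assumption2}(b) are harmless — they only enlarge the family $\{h_{\alpha\beta}\}$ over which we take conic combinations, and redundant constraints cannot hurt a Positivstellensatz. Everything else is a direct concatenation of the two cited lemmas.
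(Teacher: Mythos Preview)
Your proposal is correct and follows essentially the same route as the paper: apply sparse Putinar to $f-\varepsilon$ (your $\delta$), add $\varepsilon/p$ to each piece to get strictly positive $f^\ell\in\R[\x;I_\ell]$ summing to $f$, then apply dense Krivine--Stengle on each $\K_\ell$ via Assumption~\ref{assumption2}(a). The only point the paper makes more explicit is that the auxiliary ball constraints $g^\ell_+$ are \emph{removed} from the description of $\K_\ell$ before invoking Lemma~\ref{stengle}, so that the resulting $h_{\alpha\beta}$ involve only the original $(g_j)_{j\in J_\ell}$ and the representation lands in $N^\ell$ as stated; you gesture at this (``harmless'') but should say it cleanly.
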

Note that for each $\ell$ the representation \eqref{sparse-stengle-1} only involves $g_j\in \R[\x;I_\ell]$ since the corresponding exponents in the definition of $h_{\alpha\beta}$ are $0$ if $j \notin J_\ell$.
\begin{proof}
As $f$ is positive on $\K$ there exist $\varepsilon>0$ such that $f-\varepsilon>0$ on $\K$. As remarked just after  Assumption \ref{assumption2}, we can add a redundant constraint $0 \leq g^\ell_+(\x)\leq 1$, with $g^\ell_+\in\R[\x;I_\ell]$, to the description of each of the $\K_\ell$ so as to satisfy Assumption \ref{assumption2}(b). Hence we can apply Lemma \ref{sparse-putinar} to obtain the representation 

\[
f-\varepsilon=\sum_{\ell=1}^p \left(\underbrace{\sigma^\ell_0+  \sigma^\ell_+g^\ell_+ + \sum_{j\in J_\ell}\sigma^\ell_j\,g_j}_{\in\R[\x;I_\ell]\mbox{ and }\geq 0\mbox{ on }
\K_\ell}\right),
\]
for some SOS polynomials $\sigma^\ell_j,\sigma^\ell_+$. Next, let
\[
\fl\,:=\,\varepsilon/p+\sigma^\ell_0+ \sigma^\ell_+g^\ell_+ +\sum_{j\in J_\ell}\sigma^\ell_j\,g_j,\qquad \ell=1,\ldots,p.
\]
Notice that $f=\sum_{\ell=1}^p\fl$ and each $\fl\in\R[\x;I_\ell]$ is strictly positive on $\K_\ell$, $\ell=1,\ldots,p$. Removing the redundant constraints $0 \leq g^\ell_+\leq 1$ from the description of $\K_\ell$ again does not change the fact that $\fl$ is strictly positive on $\K_\ell$. Furthermore Assumption \ref{assumption2}(a) still holds. Hence, we can apply Lemma \ref{stengle} to each of the $f^\ell$, which yields \eqref{sparse-stengle-1} for each $\ell=1,\ldots,p$.

\end{proof}
		
\section{Main result}

\subsection{The Sparse Bounded-Degree SOS-hierarchy (Sparse-BSOS)}
Consider problem \eqref{def-pb} and let Assumption \ref{assumption1} \& \ref{assumption2} hold. For $d\in\N$ let $N^\ell_d:=\{(\alpha,\beta)\in N^\ell:\sum_j\left(\alpha_j+\beta_j\right)\leq d \}$, which is of size $\binom{2 m_\ell+d}{d}$. Let $k\in\N$ be fixed and define 
\[
\dmax := \max\{\deg(f),2k,d\max_j\{\deg(g_j)\}\}.
\] 
We consider the family of optimization problems indexed by $d\in\N$:

\begin{equation}\label{aux_{1}}
\begin{split}
q_d^k := \sup_{\scriptstyle\begin{array}{cc}t,\blambda^1,\ldots,\blambda^p,\\ f^1,\ldots, f^p\end{array}} \left\{\,  t\;:\;\right.
&\fl-\displaystyle\sum_{(\alpha,\beta)\in N^{\ell}_d}
\lambda^\ell_{\alpha\beta} h_{\alpha\beta}\in\Sigma[\x;I_\ell]_k,\quad \ell=1,\ldots,p,\\
& f\,-t\,=\,\sum_{\ell=1}^p\fl,\; \left. \blambda^\ell\in\R^{|N^\ell_d|}_+,\: t\in\R,\:\fl\in\R[\x;I_\ell]_{\dmax} \right\},
\end{split}
\end{equation}
where the scalars $\lambda^\ell_{\alpha\beta}$ are the entries of the vector ${\boldsymbol \lambda}^\ell\in\R^{|N^\ell_d|}_+ $.

Observe that when $k$ is fixed, then for each $d\in\N$, computing $q^k_d$ in \eqref{aux_{1}} reduces to  solving a semidefinite program and $q^k_{d+1}\geq q^k_d$ for all $d\in\N$. Therefore \eqref{aux_{1}} defines a {\it hierarchy} of semidefinite programs of which the associated sequence of optimal values is monotone non decreasing. Let us call it the Sparse-BSOS hierarchy, as it is the {\it sparse version} of the BSOS hierarchy \cite{bsos}.

\label{SDP-formulation}
For practical implementation of \eqref{aux_{1}} there are at least two possibilities depending on how the polynomial identities in \eqref{aux_{1}} are treated in the resulting semidefinite program. To state that two polynomials $p,q\in\R[\x]_d$ are identical one can either:

-  {\it equate their coefficients} (e.g. in the monomial basis, i.e., $\mathrm{p}_\gamma=\mathrm{q}_\gamma$ for all $\gamma\in\N^n_d$), or 

- {\it equate their values} (i.e an implementation by \textit{sampling}) on $\binom{n+d}{d}$ generic points (e.g. randomly generated on  the box $[-1,1]^n$). 

Both strategies have drawbacks: To equate coefficients one has to take powers of the polynomials $g_j$ and $(1-g_j)$ which leads to an ill-conditioning of the coefficients of the polynomials $h_{\alpha\beta}$ (in the monomial basis) as some of them are multiplied by binomial coefficients which become large quickly when the relaxation order $d$ increases. On the other hand, when equating values the resulting linear system may become ill-conditioned because (depending on the points of evaluation and the $g_j$) the constraints may be nearly linear dependent.
The authors of \cite{bsos} chose point evaluation for the implementation of BSOS because the SDP solver SDPT3\footnote{In both \cite{bsos} and this paper SDPT3 \cite{TTT99,TTT03} is used as SDP solver.} is able to exploit the structure of the SDP generated in that way and hence problems with psd variables of larger size can be solved. However, this feature cannot be used in our case and so in Sparse-BSOS we have implemented the equality constraints of \eqref{aux_{1}} by comparing coefficients.
 
Indeed, equating coefficients is reasonable in the present context because we expect the number of variables $n_\ell$ in each block to be rather small. The drawback of this choice is that the resulting relaxations with high order $d$ can become time consuming (and even ill-conditioned as explained above).

Define $\mathcal{I}_\ell^d:=\{\gamma\in\N^n_d\, :\,\supp(\gamma)\in I_\ell \}$, $\Gamma^d := \{\gamma\;:\;\gamma \in \mathcal{I}_\ell^d,\mbox{for some } \ell\in \{1,\ldots,p\} \}$ and let $\boldsymbol{0}$ be the all zero vector of size $n$. Then for $k$ fixed and for each $d$, we get

\begin{equation}\label{primal}
\begin{split}
q_d^k =\sup_{\substack{t,Q^1,\ldots,Q^p,\\ \blambda^1,\ldots,\blambda^p,\\ \cf^1,\ldots, \cf^p}}\left\{\,t\quad\right.\text{s.t. } 
& \sum_{\ell:\:\gamma\in \mathcal{I}_{\ell}^{\dmax}}\cf^\ell_{\gamma} = \cf_{\gamma},\quad\forall \gamma\in\Gamma^{\dmax}\backslash\{\boldsymbol{0}\},\quad \sum_{\ell=1}^p\cf^\ell_{\boldsymbol{0}} = \cf_{\boldsymbol{0}}-t \\
& \cf^\ell_\gamma  -\sum_{(\alpha,\beta)\in N^{\ell}_d} \lambda^\ell_{\alpha\beta}\,(h_{\alpha\beta})_\gamma - \inprod{Q^\ell}{\left(\bv_k^\ell (\bv_k^\ell)^T\right)}_\gamma = 0,\quad\\ 
& \hspace*{5cm}\forall \gamma\in\mathcal{I}_\ell^{\dmax},\quad \ell=1,\ldots,p, \\ 
& \left. Q^\ell \in \cS^{s(\ell,k)}_+,\; \blambda^\ell\in\R^{|N^l_d|}_+ ,\; \boldsymbol{\cf}^\ell\in \R^{s(\ell,\dmax)},\;\ell=1,\ldots,p,\;t\in\R \right\},
\end{split}
\end{equation}
where $s(\ell,k) := \binom{n_\ell+k}{k}$, and $\bv_k^\ell$ is the vector of the canonical (monomial) basis of the vector space $\R[\x;I_\ell]_k$. Here we use the convention that the coefficient $\mathrm{q}_\gamma$ of a polynomial $q$ is $0$ if $|\gamma| > \deg(q)$. For a matrix polynomial $\mathbf{q} = (q_{ij})_{1\leq i,j\leq s}\in\R[\x]^{s\times s}$ the coefficient $\mathbf{q}_\gamma$ is the matrix $((q_{ij})_\gamma)_{1\leq i,j\leq s}\in\R^{s\times s}$. Note that the semidefinite matrix variables have fixed size $s(\ell,k)$, independent of $d\in\N$. This is a crucial feature for computational efficiency of the approach. 

The dual of the semidefinite program (\ref{primal}) reads:

\begin{equation}\label{dual-aux_{1}}
\begin{split}
\tilde{q}_d^k := \inf_{\substack{\btheta^1,\ldots,\btheta^p,\y}} \left\{\, L_{\y}(f) \right.\quad \text{s.t. }
& y_\gamma\,=\,\theta^\ell_\gamma,\quad \forall\ell: \gamma\in \mathcal{I}_\ell^{\dmax};\quad \forall\gamma\in\Gamma^{\dmax},\quad y_0\,=\,1,\\
&\M_k(\btheta^\ell)\,\succeq\,0,\quad L_{\btheta^\ell}(h_{\alpha\beta})\,\geq\,0,\quad (\alpha,\beta)\in N^\ell_d;\quad\ell=1,\ldots,p\\
&\left. \btheta^\ell\in\R^{s(\ell,\dmax)},\; \ell=1,\ldots, p,\quad  \y\in\R^{|\Gamma^d|} \right\}.
\end{split}
\end{equation}

By standard weak duality of convex optimization, $\tilde{q}^k_d\geq q^k_d$ for all $d\in\N$. 
Moreover \eqref{dual-aux_{1}} is a relaxation of \eqref{def-pb}. Indeed, if $\hat{\x}$ is feasible in \eqref{def-pb}, define $\theta_\gamma^\ell:=\hat{\x}^\gamma$ for all $\gamma \in \mathcal{I}^{\dmax}_\ell$ and all $\ell\in\{1,\ldots,p\}$. Define $\y$ according to the constraints in \eqref{dual-aux_{1}}. Then $y_0=\hat{\x}^0=1$. Let $v_k^\ell$ be the vector of the canonical (monomial) basis of the vector space $\R[\x;I_\ell]_k$, understood as a function $\R^n\to\R^{s(\ell,k)}$. Then $\M_k(\btheta^\ell) = v_k^\ell(\hat{\x})(v_k^\ell(\hat{\x}))^T \succeq 0$. Finally, regarding the Riesz functionals $L_{\btheta^\ell}(h_{\alpha\beta})=h_{\alpha\beta}(\hat{x})\geq 0$ for all $(\alpha,\beta)\in N^\ell_d$, $\ell=1,\ldots,p$ and $L_{\y}(f) = f(\hat{\x})$. Thus to every feasible point $\hat{\x}$ of $(P)$ corresponds a feasible point of \eqref{dual-aux_{1}} giving the same value $f(\hat{\x})$ and therefore
$f^\ast\geq\tilde{q}^k_d\geq q^k_d$ for all $d\in\N$. In fact we have an even more precise and interesting result:

\begin{theorem}[\cite{lasserre-new}]\label{convergence}
Consider problem \eqref{def-pb} and let Assumption \ref{assumption1} \& \ref{assumption2} hold. Let $k\in\N$ be fixed. Then the sequence $(q^k_d)_{d\in\N}$, defined in \eqref{aux_{1}} is monotone non-de\-crea\-sing and $q^k_d\to f^\ast$ as $d\to\infty$.
\end{theorem}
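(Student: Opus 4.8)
The plan is to establish the two assertions in turn. Monotonicity is almost immediate: for fixed $k$ one has $N^\ell_d\subseteq N^\ell_{d+1}$ and $\dmax$ is non-decreasing in $d$, so any point feasible for \eqref{aux_{1}} at level $d$ stays feasible at level $d+1$ after extending each vector $\blambda^\ell$ by zeros on $N^\ell_{d+1}\setminus N^\ell_d$ and viewing each $\fl$ inside the (possibly larger) space $\R[\x;I_\ell]_{\dmax}$ attached to level $d+1$; the SOS membership and the identity $f-t=\sum_\ell\fl$ are unaffected. Hence $q^k_{d+1}\ge q^k_d$. I would also recall that the paragraph preceding the theorem already gives $q^k_d\le\tilde q^k_d\le f^*$ for every $d$, so it remains only to prove that $q^k_d$ approaches $f^*$ from below.

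For the latter, since $(q^k_d)$ is non-decreasing and bounded above by $f^*$, it suffices to show that for every $\varepsilon>0$ there is some $d$ with $q^k_d\ge f^*-\varepsilon$. First I would note that $\K$ is compact: each $\K_\ell$ is compact by Assumption \ref{assumption2}(a), and because $\bigcup_\ell I_\ell=\{1,\ldots,n\}$, the description \eqref{eq:KKl} forces every coordinate to be bounded on $\K$; as $f$ is continuous it attains the value $f^*$ on $\K$ (we may assume $\K\neq\emptyset$), so $f-(f^*-\varepsilon)$ is strictly positive on $\K$. Then I would apply the Sparse Krivine--Stengle Positivstellensatz (Theorem \ref{sparse-stengle}) to $f-(f^*-\varepsilon)$, obtaining polynomials $\fl\in\R[\x;I_\ell]$ with $f-(f^*-\varepsilon)=\sum_{\ell=1}^p\fl$ and finitely many positive weights $c^\ell_{\alpha\beta}$, $(\alpha,\beta)\in N^\ell$, such that $\fl=\sum_{(\alpha,\beta)\in N^\ell}c^\ell_{\alpha\beta}\,h_{\alpha\beta}$ for $\ell=1,\ldots,p$.

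The final step turns this certificate into a feasible point of \eqref{aux_{1}}. As only finitely many weights are nonzero, I would pick $d$ so large that $\sum_j(\alpha_j+\beta_j)\le d$ for every pair $(\alpha,\beta)$ with some $c^\ell_{\alpha\beta}\neq0$, i.e. so that all such pairs belong to $N^\ell_d$. Setting $t:=f^*-\varepsilon$, $\lambda^\ell_{\alpha\beta}:=c^\ell_{\alpha\beta}$ for $(\alpha,\beta)\in N^\ell_d$ (which is then a nonnegative vector), and keeping the $\fl$ above, one checks directly that $\fl-\sum_{(\alpha,\beta)\in N^\ell_d}\lambda^\ell_{\alpha\beta}\,h_{\alpha\beta}=0\in\Sigma[\x;I_\ell]_k$, that $f-t=\sum_{\ell=1}^p\fl$, and that $\deg\fl\le d\max_j\deg(g_j)\le\dmax$, so $\fl\in\R[\x;I_\ell]_{\dmax}$; hence this choice is feasible for \eqref{aux_{1}} at level $d$ and $q^k_d\ge t=f^*-\varepsilon$. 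Letting $\varepsilon\downarrow0$ and using monotonicity together with $q^k_d\le f^*$ gives $q^k_d\to f^*$. I do not expect a genuine obstacle: the substantive content is entirely contained in Theorem \ref{sparse-stengle}, and the remaining work is bookkeeping; the one place to be slightly careful is to confirm that the degree budget $\dmax$ — defined precisely so as to include the term $d\max_j\deg(g_j)$ — is large enough to accommodate each $\fl$ at the chosen level $d$, and that $\K$ is compact so that the Positivstellensatz is applicable.
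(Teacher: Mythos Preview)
Your proof is correct and follows essentially the same route as the paper: monotonicity from the nested feasible sets, then apply the Sparse Krivine--Stengle Positivstellensatz (Theorem~\ref{sparse-stengle}) to $f-(f^*-\varepsilon)$ and read off a feasible point of \eqref{aux_{1}} for $d$ large enough. You simply spell out a few details the paper leaves implicit---compactness of $\K$, the degree check $\deg\fl\le d\max_j\deg(g_j)\le\dmax$, and the extension of $\blambda^\ell$ by zeros---but the argument is the same.
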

\begin{proof}
Monotonicity of the sequence $(q^k_d)_{d\in\N}$ follows from its definition. Let $\varepsilon>0$ be fixed arbitrary. Then the polynomial $f-f^*+\varepsilon$ is positive on $\K$. By Theorem \ref{sparse-stengle} there exist some polynomials $f^1\ldots,f^p$ such that \eqref{sparse-stengle-1} holds, i.e.,
\[
f-\underbrace{(f^*-\varepsilon)}_{t}\,=\,\sum_{\ell=1}^pf^\ell\quad\mbox{with}\quad f^\ell\,=\,\sum_{(\alpha,\beta)\in N^{\ell}}c_{\alpha\beta}^\ell\,h_{\alpha\beta},\quad\ell=1,\ldots,p,
\]
for finitely many positive weights $c^\ell_{\alpha\beta}$.
Hence $(f^*-\varepsilon,f^\ell,c_{\alpha\beta}^\ell)$ is a feasible solution for \eqref{aux_{1}} as soon as $d$ is sufficiently large, and therefore $q^k_d\geq f^*-\varepsilon$.
Combining this with $q^k_d\leq f^*$ and noting that $\varepsilon>0$ was arbitrary, yield the desired result $q^k_d\to f^*$ as $d\to\infty$.
\end{proof}

Note that we optimize over all possible representations of $f$ of the form $f=\sum_{\ell=1}^p \fl$ with $\fl\in\R[\x;I_\ell]$. By Assumption \ref{assumption1} such a representation exists; however it does not need to be unique.

We next show that a distinguishing feature of the dense BSOS hierarchy \cite{bsos} is also valid for its sparse version Sparse-BSOS.

\begin{theorem}
\label{sosconvexproperty}
Assume the feasible set \K\ in problem \eqref{def-pb} is non-empty and let Assumption \ref{assumption1} \& \ref{assumption2} hold. Let $k\in\N$ be fixed and assume that for every $\ell=1,\ldots,p$, the polynomials $f^\ell$ and $-g_j$ are all SOS-convex polynomials of degree at most $2k$.
(If $k>1$ we assume (with no loss of generality) that for each $\ell=1,\ldots,p$, and some sufficiently large $\kappa_\ell>0$, 
the redundant (SOS-convex) constraints $1-\kappa_\ell^{-1}\sum_{i\in I_\ell}x_i^{2k}\geq0$, $\ell=1,\ldots,p$,
are present in the description (\ref{setkk}) of $\K$.)

Then the semidefinite program \eqref{dual-aux_{1}} has an optimal solution $((\btheta^{*\ell}),\y^*)$ such that
$f^*=\tilde{q}^k_{1}=L_{\y^*}(f)$ and $\x^*:=(L_{\y^*}(x_1),\ldots,L_{\y^*}(x_n))\in\K$ is an optimal solution of (\ref{def-pb}).
Hence finite convergence takes place at the first step of the hierarchy.
\end{theorem}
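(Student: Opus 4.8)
The plan is to exhibit a feasible point of the dual SDP \eqref{dual-aux_{1}} at level $d=1$ whose value equals $f^*$; combined with the already-established inequality $f^*\ge\tilde q^k_1$ this forces equality and produces the claimed optimal solution. The natural candidate comes from an optimal point $\hat\x\in\K$ of \eqref{def-pb} (which exists because $\K$ is non-empty and compact by Assumption~\ref{assumption2}(a)): set $\theta^\ell_\gamma:=\hat\x^\gamma$ for $\gamma\in\mathcal I^{\dmax}_\ell$, and define $\y$ by the coupling constraints. As already noted in the discussion preceding Theorem~\ref{convergence}, this is dual-feasible and gives objective value $f(\hat\x)=f^*$, so in fact the \emph{minimum} of the dual is $f^*$. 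That direction is easy; the real content is the \emph{converse}: that \emph{every} optimal dual solution $((\btheta^{*\ell}),\y^*)$ yields a point $\x^*:=(L_{\y^*}(x_1),\dots,L_{\y^*}(x_n))$ that (i) lies in $\K$ and (ii) satisfies $f(\x^*)=f^*$. That is where SOS-convexity and Lemma~\ref{lemma-jensen} enter.

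First I would handle membership $\x^*\in\K$. Fix $j\in\{1,\dots,m\}$ and pick $\ell$ with $j\in J_\ell$, so $-g_j\in\R[\x;I_\ell]$ is SOS-convex of degree $\le 2k$; since $d=1$ we have $\dmax\ge 2k$ and the moment constraint $\M_k(\btheta^{*\ell})\succeq0$ holds with $\theta^{*\ell}_0=y^*_0=1$. The Riesz-functional constraint at $d=1$ in particular gives $L_{\btheta^{*\ell}}(g_j)\ge0$ (take $\alpha=e_j,\beta=0$, so $h_{\alpha\beta}=g_j$). Applying Lemma~\ref{lemma-jensen} to the SOS-convex polynomial $-g_j$ and the sequence $\btheta^{*\ell}$ yields $-g_j(\pi_\ell(\x^*))\le L_{\btheta^{*\ell}}(-g_j)\le0$, i.e. $g_j(\x^*)\ge0$. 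Here one must check that $L_{\btheta^{*\ell}}(x_i)=L_{\y^*}(x_i)$ for $i\in I_\ell$, which is exactly the coupling $y^*_\gamma=\theta^{*\ell}_\gamma$ for $\gamma\in\mathcal I^{\dmax}_\ell$ applied to $\gamma=e_i$; and that $g_j$ depends only on the $I_\ell$-variables so evaluating at $\x^*$ is the same as evaluating at $\pi_\ell(\x^*)$. Ranging over all $j$ gives $\x^*\in\K$ via \eqref{eq:KKl}. (When $k>1$ the extra redundant SOS-convex constraints $1-\kappa_\ell^{-1}\sum_{i\in I_\ell}x_i^{2k}\ge0$ play the role of box constraints so that the relevant moments up to degree $2k$ are controlled; this is why they are assumed present.)

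Next, for optimality of $\x^*$: write $f=\sum_{\ell=1}^p f^\ell$ with the given SOS-convex decomposition, where each $f^\ell\in\R[\x;I_\ell]_{2k}$. For each $\ell$, Lemma~\ref{lemma-jensen} applied to the SOS-convex $f^\ell$ and $\btheta^{*\ell}$ (again using $\M_k(\btheta^{*\ell})\succeq0$, $\theta^{*\ell}_0=1$) gives $L_{\btheta^{*\ell}}(f^\ell)\ge f^\ell(\pi_\ell(\x^*))=f^\ell(\x^*)$. Summing over $\ell$ and using the coupling constraints to collapse $\sum_\ell L_{\btheta^{*\ell}}(f^\ell)=L_{\y^*}(f)$ (each monomial of $f^\ell$ has support in $I_\ell$, so its $\y^*$-value agrees with its $\btheta^{*\ell}$-value), we obtain $L_{\y^*}(f)\ge\sum_\ell f^\ell(\x^*)=f(\x^*)$. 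Since $\x^*\in\K$ we have $f(\x^*)\ge f^*$, while $L_{\y^*}(f)=\tilde q^k_1\le f^*$ from the easy direction; chaining the inequalities forces $L_{\y^*}(f)=f(\x^*)=f^*$ and $\x^*$ optimal. Finite convergence at $d=1$ follows, and together with monotonicity and Theorem~\ref{convergence} the hierarchy is then constant from $d=1$ on.

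The main obstacle is bookkeeping rather than depth: one must verify carefully that the degree budget at level $d=1$ is adequate — i.e. that $\dmax=\max\{\deg f,2k,\max_j\deg g_j\}$ makes the moment sequences $\btheta^{*\ell}$ long enough to evaluate $f^\ell$, $g_j$, and (for $k>1$) the added degree-$2k$ constraints, and that all the required instances of the constraint $L_{\btheta^{*\ell}}(h_{\alpha\beta})\ge0$ genuinely appear in $N^\ell_1$ (they do: $g_j$ itself corresponds to $(\alpha,\beta)=(e_j,0)$ with total degree $1$). The other subtlety is the identification $f^\ell(\pi_\ell(\x^*))=f^\ell(\x^*)$ and the collapse of the sums under the coupling constraints, which rely on the support conditions in Assumption~\ref{assumption1}; these are routine once stated precisely. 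Note also that no appeal to the Running Intersection Property is needed here — RIP is only used for the asymptotic convergence in Theorem~\ref{convergence}, whereas the SOS-convex case is exact already at the first step for the same reason as in the dense BSOS hierarchy \cite{bsos}.
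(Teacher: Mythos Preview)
Your core argument---apply Lemma~\ref{lemma-jensen} to the SOS-convex data $f^\ell$ and $-g_j$ against the dual variables $\btheta^{*\ell}$, then use the coupling constraints to assemble a feasible $\x^*$ and chain the inequalities---is exactly what the paper does. So the substance is right.

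There is, however, a gap in how you obtain an \emph{optimal} dual solution to work with. You write ``every optimal dual solution $((\btheta^{*\ell}),\y^*)$ yields\ldots'' and later ``$L_{\y^*}(f)=\tilde q^k_1$'', which presupposes that the infimum in \eqref{dual-aux_{1}} is attained. The paper does not take this for granted: it first proves that the feasible set of \eqref{dual-aux_{1}} is bounded (hence compact), using precisely the constraints $L_{\btheta^\ell}(1-M_\ell^{-1}\sum_{i\in I_\ell}x_i^2)\ge0$ and, for $k>1$, $L_{\btheta^\ell}(1-\kappa_\ell^{-1}\sum_{i\in I_\ell}x_i^{2k})\ge0$, together with $\M_k(\btheta^\ell)\succeq0$, to bound all $|\theta^\ell_\gamma|$ for $|\gamma|\le 2k$. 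That is the actual role of the redundant degree-$2k$ constraints in the theorem's hypothesis; your parenthetical remark that they ``control the relevant moments'' is correct, but in your write-up you never use that control, so as written the existence step is missing.

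You can close the gap in two ways. Either insert the paper's compactness argument. Or---and this is the cleaner route your ``easy direction'' almost supplies---observe that the Jensen chain you wrote does not require $((\btheta^{*\ell}),\y^*)$ to be optimal: it shows $L_{\y}(f)\ge f(\x(\y))\ge f^*$ for \emph{every} feasible $((\btheta^\ell),\y)$, whence $\tilde q^k_1\ge f^*$; combined with the feasible point built from a minimizer $\hat\x\in\K$, which achieves value $f^*$, this simultaneously yields $\tilde q^k_1=f^*$ and exhibits an optimal solution (namely the $\hat\x$-point, for which $\x^*=\hat\x$ trivially). With that reordering your proof is complete and in fact bypasses the compactness argument entirely. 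A minor remark: your opening sentence ``so in fact the minimum of the dual is $f^*$'' is placed before the Jensen step and is therefore premature; it only follows once both directions are in hand.
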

\begin{proof}
Let $d=1$ (so that $\dmax=2k$) and consider the semidefinite program \eqref{dual-aux_{1}}. Note, that $\btheta^\ell=(\theta^\ell_\gamma)_{\gamma\in\mathcal{I}^{2k}_\ell}$. Recall that by Assumption \ref{assumption2}, for every $\ell=1,\ldots,p$, there exists $j\in J_\ell$ such that $g_j(\x)=1-M_\ell^{-1}\sum_{i\in I_\ell}x_i^2$. In addition if $k>1$ then there also exists $r$ such that $g_r(\x)=1-\kappa_\ell^{-1}\sum_{i\in I_\ell}x_i^{2k}$. 
Hence, feasibility in \eqref{dual-aux_{1}} (with an appropriate choice of $(\alpha,\beta)\in N^\ell_{d}$) implies that
$L_{\btheta^{\ell}}(g_j)\geq0$ and $L_{\btheta^{\ell}}(g_r)\geq0$, which in turn imply:
\[L_{\btheta}(x_i^2)\leq M_\ell\,\theta_0^\ell \:(=M_\ell)\quad\mbox{and}\quad
L_{\btheta}(x_i^{2k})\leq \kappa_\ell\,\theta_0^\ell \:(=\kappa_\ell)\quad\mbox{(if $k>1$)},\quad\forall i\in I_\ell,\quad \forall\ell=1,\ldots,p,\]
where we have used that $\theta^\ell_0=y_0=1$ for all $\ell=1,\ldots,p$.

Combining this with $\M_k(\btheta^\ell)\succeq0$ and invoking Proposition 2.38 in \cite{lasserre-cambridge} yields that $\vert\theta^\ell_\gamma\vert\leq\max[M_\ell,\kappa_\ell,1]$ for every $\vert\gamma\vert\leq 2k$ and $\ell=1,\ldots,p$. 
Consequently, the set of feasible solutions $(\btheta^\ell,\y)$ of \eqref{dual-aux_{1}} is bounded, hence compact. This implies that \eqref{dual-aux_{1}} has an optimal solution $(\btheta^{*\ell},\y^*)$. Notice that among the constraints $L_{\btheta^{*\ell}}(h_{\alpha\beta})\geq0$ are the constraints $L_{\btheta^{*\ell}}(g_j)\geq0$ for all $j\in J_\ell$. As $f^\ell$ and $-g_j$ are SOS-convex, invoking Lemma \ref{lemma-jensen} yields
\[f^\ell(\x^*_\ell)\,\leq\,L_{\theta^{*\ell}}(f^\ell)\quad\mbox{and}\quad 0\,\leq\,L_{\btheta^{*\ell}}(g_j)\,\leq\,g_j(\x^*_\ell),\quad\forall j\in J_\ell,\quad \ell=1,
\ldots,p,\]
where $\x^*_\ell:=(L_{\btheta^{*\ell}}(x_i))_{i\in I_\ell}\in\K_\ell$, $\ell=1,\ldots,p$. In addition, the constraint $y^*_\gamma\,=\,\theta^{*\ell}_\gamma$, for all $\ell$ such that $\gamma\in \mathcal{I}_\ell^{\dmax}$, implies that $(\x^*_{\ell})_i=(\x^*_{\ell'})_i$ whenever $i\in I_\ell\cap I_{\ell'}$. Therefore defining $x^*_i\,:=\,(\x^*_{\ell})_i$ whenever $i\in I_\ell$, one obtains $g_j(\x^*)\geq0$ for all $j$, i.e., $\x^*\in\K$.
Finally,
\[f^*\geq \tilde{q}^k_{1}\,=\,L_{\y^*}(f)\,=\,\sum_{\ell=1}^pL_{\btheta^{*\ell}}(f^\ell)\,\geq\,\sum_{\ell=1}^pf^\ell(\x^*_\ell)\,=\,f(\x^*),\]
which shows that $\x^*\in\K$ is an optimal solution of \eqref{def-pb}. Hence $f^*=f(\x^*)=\tilde{q}^k_{1}$.
\end{proof}

\subsection{Sufficient condition for finite convergence}\label{optimality}

\label{subsec-sufficient-condition}
By looking at the dual \eqref{dual-aux_{1}} of the semidefinite program \eqref{primal} one obtains a sufficient condition for finite convergence. Choose $\omega\in\N$ minimal such that $2\omega\geq \max\{\deg(f),\deg(g_1),\ldots,\deg(g_m)\}$. We have the following lemma.

\begin{lemma}
\label{lemma-dual}
Let $(\btheta^{*1},\ldots,\btheta^{*p},\y^*)\in\R^{s(1,\dmax)}\times\cdots\times\R^{s(p,\dmax)}\times\R^{s}$ 
be an optimal solution of \eqref{dual-aux_{1}}. 
If ${\rm rank}\,\M_\omega(\btheta^{*\ell})=1$ for every $\ell=1,\ldots,p$, then $\tilde{q}^k_d=f^*$ and $\x^*=(y^*_{\boldsymbol{\gamma}})_{\vert\gamma\vert=1}$ is an optimal solution of problem $(P)$.
\end{lemma}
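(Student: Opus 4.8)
The plan is to exploit the classical fact from the moment approach that a rank-one moment matrix forces the associated truncated sequence to be the moment sequence of a single Dirac measure, whose atom is recovered from the first-order moments. I would apply this blockwise to produce points $\x^*_\ell$, use the coupling constraints $y_\gamma=\theta^\ell_\gamma$ of \eqref{dual-aux_{1}} to see that these atoms are the projections of a single $\x^*\in\R^n$, and finally read off $\x^*\in\K$ and optimality from the remaining constraints together with the already-established bound $f^\ast\ge\tilde{q}^k_d$. Throughout I tacitly assume $d$ is large enough that $\dmax\ge 2\omega$, which is exactly what makes $\M_\omega(\btheta^{*\ell})$ meaningful as a Hankel-type moment matrix built from the entries of $\btheta^{*\ell}$.

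The first step is the blockwise analysis. Fix $\ell$. The normalization $y^*_0=1$ and the coupling constraint $y^*_0=\theta^{*\ell}_0$ give $\theta^{*\ell}_0=1$, so the $(0,0)$-entry of the symmetric rank-one matrix $\M_\omega(\btheta^{*\ell})$ is positive; hence $\M_\omega(\btheta^{*\ell})=\bz\bz^T$ for some vector $\bz$ indexed by the monomials in $\{x_i:i\in I_\ell\}$ of degree at most $\omega$ (so in particular $\M_\omega(\btheta^{*\ell})\succeq0$, even if $\omega>k$). Reading off entries, $\theta^{*\ell}_{\gamma+\delta}=z_\gamma z_\delta$ whenever $|\gamma|,|\delta|\le\omega$; taking $\delta=0$ gives $\theta^{*\ell}_\gamma=z_\gamma z_0$ and $z_0^2=1$, hence $\theta^{*\ell}_{\gamma+\delta}=\theta^{*\ell}_\gamma\theta^{*\ell}_\delta$ for all such $\gamma,\delta$ supported in $I_\ell$. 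Iterating this multiplicativity yields $\theta^{*\ell}_\eta=\prod_{i\in I_\ell}(\theta^{*\ell}_{e_i})^{\eta_i}$ for every $\eta$ of degree at most $2\omega$ supported in $I_\ell$, where $e_i$ denotes the $i$-th unit vector. Thus $\btheta^{*\ell}$ coincides, up to degree $2\omega$, with the moment sequence of the Dirac measure at $\x^*_\ell:=(\theta^{*\ell}_{e_i})_{i\in I_\ell}$, and the constraints $y^*_{e_i}=\theta^{*\ell}_{e_i}$ ($i\in I_\ell$) show $\x^*_\ell=\pi_\ell(\x^*)$ with $\x^*:=(y^*_\gamma)_{|\gamma|=1}$.

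The second step extracts feasibility and the objective value. For each $j\in J_\ell$ the index set $N^\ell_d$ contains pairs $(\alpha,\beta)$ with $h_{\alpha\beta}=g_j$ and with $h_{\alpha\beta}=1-g_j$, so \eqref{dual-aux_{1}} forces $L_{\btheta^{*\ell}}(g_j)\ge0$ and $L_{\btheta^{*\ell}}(1-g_j)\ge0$; since $g_j\in\R[\x;I_\ell]$ has degree at most $2\omega$, the first step turns these into $0\le g_j(\x^*_\ell)\le1$ for all $j\in J_\ell$, i.e.\ $\x^*_\ell\in\K_\ell$ by \eqref{setskell}. As this holds for every $\ell$ and $\x^*_\ell=\pi_\ell(\x^*)$, the description \eqref{eq:KKl} gives $\x^*\in\K$. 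Similarly, every monomial $\x^\gamma$ occurring in $f$ has $\supp(\gamma)\subseteq I_\ell$ for some $\ell$ (because $f=\sum_\ell\fl$ with $\fl\in\R[\x;I_\ell]$), hence $y^*_\gamma=\theta^{*\ell}_\gamma=(\x^*_\ell)^\gamma=(\x^*)^\gamma$; summing over the monomials of $f$ gives $L_{\y^*}(f)=f(\x^*)$. Combining with the bound $f^\ast\ge\tilde{q}^k_d$ shown above, $f^\ast\ge\tilde{q}^k_d=L_{\y^*}(f)=f(\x^*)\ge f^\ast$ (the last step because $\x^*\in\K$), so $\tilde{q}^k_d=f^\ast=f(\x^*)$ and $\x^*$ is optimal for \eqref{def-pb}.

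I expect the only genuinely delicate point to be the implication ``rank-one $\Rightarrow$ Dirac structure up to degree $2\omega$'': one must use the Hankel structure of $\M_\omega(\btheta^{*\ell})$ so that the factorization $\bz\bz^T$ is consistent across all decompositions $\eta=\gamma+\delta$, and keep the degree bookkeeping straight ($\deg g_j\le 2\omega\le\dmax$, $\deg f\le 2\omega$) so that $L_{\btheta^{*\ell}}(g_j)$ and $L_{\y^*}(f)$ are legitimately evaluated at the atoms. Everything else is a routine unwinding of the constraints of \eqref{dual-aux_{1}}.
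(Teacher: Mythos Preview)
Your proposal is correct and follows essentially the same line as the paper's own proof: rank-one moment matrix $\Rightarrow$ Dirac structure on each block, coupling constraints glue the atoms into a single $\x^*$, the linear constraints $L_{\btheta^{*\ell}}(h_{\alpha\beta})\ge0$ yield feasibility, and the sandwich $f^*\ge\tilde q^k_d=L_{\y^*}(f)=f(\x^*)\ge f^*$ closes the argument. The only cosmetic differences are that you spell out the multiplicativity $\theta^{*\ell}_{\gamma+\delta}=\theta^{*\ell}_\gamma\theta^{*\ell}_\delta$ explicitly (the paper just invokes the standard ``rank-one $\Rightarrow$ Dirac'' fact), and you route feasibility through \eqref{setskell}--\eqref{eq:KKl} using both $g_j\ge0$ and $1-g_j\ge0$, whereas the paper checks $g_j(\x^*)\ge0$ directly; neither change affects the substance.
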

\begin{proof}
If ${\rm rank}\,\M_\omega(\btheta^{*\ell})=1$, then $(\theta^{*\ell}_\gamma)_{\vert\gamma\vert\leq 2\omega}$, is the vector of moments (up to order $2\omega$) of the Dirac measure $\delta_{\x^\ell}$ at the point 
$\x^\ell:=(\theta^{*\ell}_\gamma)_{\vert\gamma\vert=1}\in\R^{n_\ell}$. 
Note that from the constraints $y_\gamma\,=\,\theta^{\ell}_\gamma$ in \eqref{dual-aux_{1}}
\[
x^{\ell_1}_\gamma = x^{\ell_2}_\gamma,\quad \forall \ell_1,\ell_2: \gamma \in \mathcal{I}_{\ell_1}^1\cap \mathcal{I}_{\ell_2}^1.
\]
Hence we can define $x^*_\gamma := x^\ell_\gamma$ for all $\gamma$ such that $\vert\gamma\vert=1$ and independent of the specific choice of $\ell$ such that $\gamma \in \mathcal{I}_{\ell}$. For the same reason $\x^*=(y^*_{\boldsymbol{\gamma}})_{\vert\gamma\vert=1}$.
Consequently, for all $q\in\mathbb{R}[\x;I_\ell]_{2\omega}$
\[
q(\x^*) = q(\x^\ell) = \int q\;\delta_{\x^\ell} = L_{\btheta^{*\ell}}(q)\,=\,L_{\y^*}(q) 
\]
Let $j\in J_\ell$. The constraints $L_{\btheta^{*\ell}}(h_{\alpha\beta})\geq0$ imply in particular $L_{\btheta^{*\ell}}(g_j)\geq0$. Since ${\rm deg}(g_j)\leq 2\omega$, $0\leq L_{\btheta^{*\ell}}(g_j)=g_j(\x^\ell)=g_j(\x^*)$, and so as $j\in J_\ell$ was arbitrary, $\x^*\in\K$.
Finally, and again because ${\rm deg}(f)\leq 2\omega$,
\[f^*\,\geq\,\tilde{q}^k_d\,=\,L_{\y^*}(f)\,=\,f(\x^*),\]
from which we may conclude  that $\x^*\in\K$ is an optimal solution of problem \eqref{def-pb}.
\end{proof}

\section{Computational issues}
\subsection{Comparing coefficients}
As already outlined earlier we have implemented polynomial equality constraints in \eqref{aux_{1}} by comparison of coefficients. The resulting constraints in the SDP are sparse and can be treated efficiently by the SDP solver. A crucial issue for the implementation of the Sparse-BSOS relaxations is how to equate the coefficients. The bottleneck for such an implementation is that one has to gather all occurrences of the same monomials.

As in \cite{bsos}, we use the following data format for representing a polynomial $f$ in $n$ variables: 
\[
\verb|F(i,1:n+1)| = [\gamma^T,\cf_\gamma],
\]
stating that $\cf_\gamma$ is the $i$th coefficient of $f$ corresponding to the monomial $\x^\gamma$. Adding two polynomials is done by concatenating their representations. Hence, equating the coefficients of $\x^\gamma$ is basically finding all indices $i$ of a polynomial $F$, such that $\verb|F(i,1:n)| = \gamma^T$. 

Matlab is providing the function \verb|ismember(A,B,'rows')| to find a row \verb|A| in a Matrix \verb|B|. This however is too slow for our purpose. Instead of using this function, we reduce the problem to finding all equal entries of a vector, which can be handled much more efficiently. To that end we multiply \verb|F(:,1:n)| by a random vector. Generically this results in a vector whose entries are different if and only if the corresponding rows in \verb|F(:,1:n)| are different. 

\subsection{Reducing problem size} \label{sec:removevariables}
By looking at (\ref{primal}) more closely one may reduce the number of free variables and the number of constraints. 
It is likely that there are some indices $i\in \{1,\ldots,n\}$, that only appear in one of the $I_\ell$, say $i\in I_{\ell_i}$. Hence, for all $\gamma\in \bigcup_{j=1}^p \mathcal{I}^{\dmax}_j$ such that $\gamma_i\neq 0$ the second equality constraint in (\ref{primal}) reduces to $\cf^{\ell_i}_\gamma = f_\gamma$. 
Consequently, there is a number of variables that are or can be fixed from the beginning. We do this in our implementation. However, in order to be able to certify optimality by Lemma \ref{lemma-dual}, one needs to trace back these substitutions, to recover the moment sequences $\y^\ell$ from the solution of the dual problem. Removing these fixed variables occasionally leads to equality constraints $0=0$ in the SDP. We remove those constraints for better conditioning.

\section{Numerical experiments}
\label{num-examples}
In this section we provide some examples to illustrate the performance of our approach and to compare  with others. It is natural to compare Sparse-BSOS with its dense counterpart BSOS \cite{bsos} whenever possible. We also compare Sparse-BSOS with the sparse standard SOS-hierarchy \cite{waki} (Sparse-PUT). We use the following notation to refer to both the different SDP relaxations of Problem \eqref{def-pb} and to the particular implementations used in our experiments. Note that for the sparse versions, we suppose that Assumption \ref{assumption1} holds.
\begin{itemize}
\item BSOS: The implementation of the dense version of BSOS as described in \cite{bsos}. 
\[
\begin{split} 
\sup_{t,Q,\blambda}\left\{\,t\quad\right.\text{s.t. } 
& f(x^{(\tau)})  -\sum_{(\alpha,\beta)\in \N^{2m}_d} \lambda_{\alpha\beta}\,h_{\alpha\beta}(x^{(\tau)}) - \inprod{Q}{\left(v_k(x^{(\tau)}) (v_k(x^{(\tau)}))^T\right)} = 0,\\ 
& \left.  \tau=1,\ldots,|\N^n_{\dmax}|, \quad Q \in \cS^{|\N^n_k|}_+,\; \blambda\in\R^{|\N^{2m}_d|}_+,\;t\in\R \right\},
\end{split}
\]
where $d$ is the relaxation order, $k$ is a parameter fixed in advance to control the size of the psd variable $Q$, $v_k$ is the vector of the canonical (monomial) basis of the vector space $\R[\x]_k$  understood as a function $\R^n\to\R^{|\N^n_k|}$, and $\{ x^{(\tau)}\;:\; \tau=1,\ldots,|\N^n_{\dmax}|\}$ is a set of generic points in $[-1,1]^n$.
We emphasize that BSOS uses sampling to implement the equality constraints in the above problem (see discussion \S\ref{SDP-formulation}). We do the same implementation as is used for the numerical experiments in \cite{bsos}. As explained in \cite[Section 3]{bsos}, some of the constraints in the SDP stated above might by nearly redundant, i.e. they might be \enquote{almost} linearly dependant of others. Such constraints are removed before handing the problem over to the SDP solver. A close look to the code also reveals that the maximum number of point evaluations considered is limited to approximately $5000$. This choice has been made to prevent the solver from running out of memory. As a consequence, when $|\N^{2m}_d| > 5000$ the implemented SDP is a relaxation of the relaxation stated above.

\item Sparse-BSOS: Our implementation of the sparse version of BSOS as described in \eqref{primal}.
 
Again $d$ is the relaxation order, $k$ is a parameter fixed in advance to control the size of the psd variables $Q^\ell$.
We use the technique described in the previous section to reduce the size of the SDP before handing it over to the SDP solver. However we do not remove any information from the SDP.
\item Sparse-PUT: Our implementation of the sparse version of the standard SOS-hierarchy \cite{waki}.  
\[
\begin{split}
\sup_{\substack{t,\boldsymbol{Q}^1,\ldots,\boldsymbol{Q}^p,\\\cf^1,\ldots,\cf^p}}\left\{\,t\quad\right.\text{s.t. } 
& \sum_{\ell:\:\gamma\in \mathcal{I}_{\ell}^{\dmax}}\cf^\ell_{\gamma} = \cf_{\gamma},\quad\forall \gamma\in\Gamma^{2d}\backslash\{\boldsymbol{0}\},\quad \sum_{\ell=1}^p\cf^\ell_{\boldsymbol{0}} = \cf_{\boldsymbol{0}}-t \\
& \cf^\ell_\gamma - \inprod{Q_0^\ell}{\left(\bv_d^\ell (\bv_d^\ell)^T\right)}_\gamma - \sum_{j\in J_\ell}\inprod{Q_j^\ell}{\left(\bv_{d_j}^\ell (\bv_{d_j}^\ell)^T\right)}_\gamma = 0,\quad\\ 
& \hspace*{5cm}\forall \gamma\in\mathcal{I}_\ell^{2d},\quad \ell=1,\ldots,p, \\ 
&  Q_0^\ell\in\cS^{s(\ell,d)}, Q_j^\ell \in \cS^{s(\ell,d_j)}_+, j\in J_\ell,\;\ell=1,\ldots,p,\\
& \left.\boldsymbol{\cf}^\ell\in \R^{s(\ell,\dmax)},\;\ell=1,\ldots,p,\;t\in\R \right\},
\end{split}
\]
where $d$ is the relaxation order and $d_j$ is the largest integer less or equal to $\frac{2d-\deg(g_j)}{2}$. We use the same technique to reduce the size of the SDP as for Sparse-BSOS. 
\end{itemize}

The aim of the experiments is twofold. On the one hand we compare Sparse-BSOS with BSOS on small and medium size problems (as BSOS cannot handle large scale problems) with different sparsity patterns. In particular we are interested in the following issues:
\begin{itemize}
\item the influence of the block sizes (depending on the sparsity pattern) when the size of overlaps between blocks of variables is fixed.
\item the influence of various block and overlap sizes for a fixed number of variables ($n=90$).
\item does the finite convergence of the dense version occur systematically earlier than for the sparse version?
(As it cannot occur later.)  
\end{itemize}
On the other hand we compare Sparse-BSOS with Sparse-PUT on high degree small size and lower degree medium and large scale problems. This comparison requires some care because the feasible set for Sparse-PUT is $\K=\{\x:g_j(\x)\geq 0 \}$ while for Sparse-BSOS (and BSOS) it $\K=\{\x:0 \leq g_j(\x)\leq 1 \}$. Hence we code the information about the feasible set in the constraints $0 \leq g_j(\x)$ and scale the constraint polynomials $g_j$ to be less than $1$ on the feasible set. 
In general one expects that if Sparse-PUT gives a good result, Sparse-BSOS will not do better. However we have identified at least three scenarii where Sparse-BSOS can beat Sparse-PUT (and does it at least in some examples). 
\begin{itemize}
\item The first possible Sparse-PUT relaxation\footnote{In Sparse-PUT the maximum size of the psd variables is ${n^*+d\choose n^*}\times{n^*+d\choose d}$ where $n^*=\max_\ell\vert I_\ell\vert$ and $2d\geq \max[{\rm deg}(f),{\rm deg}(g_j)]$.}
yields the optimal value of the polynomial optimization problem, and some degrees $d_j$ of the SOS weights are potentially greater than $0$. This happens, when the degree of the objective function is larger than the degree of some constraint  plus $2$. Then setting the parameter $k=\deg(f)/2$, the first Sparse-BSOS relaxations ($d=1,2,\ldots$) are faster than Sparse-PUT and may also reach the optimal value; this is illustrated in Tables \ref{tab:chainwood} \& \ref{tab:chainsing}.
\item The first possible Sparse-PUT relaxation does not reach the optimal value of the POP and the second relaxation cannot be solved (because its size is too large and/or is too costly to implement). If the SOS weights in the first Sparse-PUT relaxation are all of degree $0$, then again setting the parameter $k=\deg(f)/2$, the first Sparse-BSOS relaxation gives the same result and it is possible to obtain better bounds by going higher in the relaxation order. In particular, this is the case for the important class of quadratic/quadratically constrained programs (that is when
$\max[{\rm deg}(f),{\rm deg}(g_j)]\leq 2$); this is illustrated in Table \ref{tab:QP_quad}.
\item The first possible Sparse-PUT relaxation cannot be solved. Then setting the parameter $k<\deg(f)/2$, the first Sparse-BSOS relaxations ($d=1,2,\ldots$) may be solvable and so provide lower bounds on the optimal value of the polynomial optimization problem whereas Sparse-PUT cannot; this is illustrated in Table \ref{tab:QP_quat}.
\end{itemize}
To summarize the above cases, in Sparse-BSOS we take full advantage of the facts that (a) the constraints enter the certificate only with non-negative weights in contrast to SOS weights in Sparse-PUT, (b) the size of the psd variables is fixed and does not increase with the relaxation order. In particular, 
while the minimal size of the largest psd variable in Sparse-PUT is determined by the polynomial data, in Sparse-BSOS one can always set the parameter $k$ to $1$ which implies that the maximum size of the semidefinite matrices in the SDP \eqref{primal} is always at most  $O(n^*)$ where $n^*=\max_\ell n_\ell$, for all $d$. This is because by assumption, the $g_j$ generate the algebra $\R[\x]$ and so a polynomial of arbitrary degree and positive on $\K$, can be obtained as a positive linear combination of the $g_j^{\alpha_j}(1-g_j)^{\beta_j}$ (with no SOS involved). So even if $\max[{\rm deg}(f),\max_j{\rm deg}(g_j)]>2$, the optimal value of \eqref{aux_{1}} (with $k=1$) is finite as soon as $d$ is large enough, and so provides a non trivial lower bound.

The results on numerical experiments described in the next sections are biased by the (limited) sample of examples that we have considered. Therefore they should be understood as partial indications rather than definite conclusions. The latter would require much more computational experiments beyond the scope of the present paper.\\

All experiments were performed on an Intel Core i7-5600U CPU @ 2.60GHz $\times$ 4 with 16GB RAM. Scripts are executed in Matlab 8.5 (R2015b) 64bit on Ubuntu 14.04 LTS operating system. \label{computer} The SDP solver used is SDPT3-4.0\cite{TTT99,TTT03}.

The results are presented in tables below. They provide the following information:
\begin{itemize}
\item A pattern or problem code to identify the example.
\item The relaxation order $d$ and the chosen parameter $k$ for the psd constraints.
\item The maximal degree $\dmax$, appearing in the certificate.
\item The numbers of non-negative variables (corresponding to $\lambda_{\alpha\beta}$), unrestricted (free) variables (corresponding to $t$ and the coefficients of the $\fl$), and the number (and size) of the positive semidefinite variables (corresponding to the sums of squares).
\item The number of (equality) constraints in the SDP.
\item The (primal) solution of the SDP.
\item The time in seconds, including the times to generate and solve the SDP as well as computing the optimality condition.
\item The abbreviation rk stands for the rank of the moment matrices according to Section \ref{optimality}. In the case of Sparse-BSOS and Sparse-PUT, rk is the average rank of all moment matrices and can hence be a decimal number. When reporting an integer the rank is acutally integer, i.e. if we write rk$ = 1$, the rank is actually $1$, if however we write $1.0$ the rank is strictly bigger than $1$.
\item If the primal solution is written in bold, it was certified by the rank condition and coincides\footnote{In this context we consider two numbers to be equal if there difference is less than $10^{-8}$.} with the global optimum of the POP.
\item Primal solutions were marked with * when the solver stopped because \textit{steps were too short}, the \textit{maximum number of iterations} was achieved, or \textit{lack of progress}. In these cases one has to consider the result carefully.
\end{itemize}

\subsection{BSOS vs. Sparse-BSOS}

\subsubsection{Dense small size examples }\label{sec:dense}
\textbf{Table \ref{tab:dense}:} We compare the sparse and the dense version of BSOS on a set of examples introduced in \cite{bsos}. In the problem description the first number of the name indicates the number of variables, the second the degree of the problem. The examples are relatively small size, i.e. $n\leq 20$. The degree of the objective function and the constraints is between $2$ and $8$. As the test sample is from the dense version, no sparsity pattern is present and we pass the information $I=\{1,\ldots,n\}$ and $J = \{1,\ldots,m\}$ to Sparse-BSOS. Consequently both hierarchies compute the same certificate. The only difference comes from the implementation of the equality constraints and the different handling of the psd variable in SDPT3 (see discussion p. \ref{SDP-formulation}).

\begin{table} 
\begin{tabular}{|c|c|c||ccc||ccc|} 
 \hline 
		&       &		&  \multicolumn{3}{c||}{BSOS} & \multicolumn{3}{c|}{Sparse-BSOS}\\
problem	& (d,k) &$\dmax$& solution       			& rk & time & solution  				& rk & time\\ 
 \hline 
 P4\_2 	& (1,1) &	2	& \textbf{-5.7491e-01}  	&  1 &  0.8s	 & \textbf{-5.7491e-01}	&  1 &   1.6s \\ 
\hline 
 P4\_4 	& (1,2) &	4	& -6.5919e-01  			&  7 &  0.3s & -6.5919e-01  			&  3 &   0.4s \\ 
  		& (2,2) &	8	& \textbf{-4.3603e-01}  	&  1 &  0.7s & \textbf{-4.3603e-01} 	&  1 &   0.5s \\ 
\hline 
 P4\_6 	& (1,3) &	6	& -6.2500e-02* 			& 27 &  1.0s	 & -6.2500e-02  			& 15 &   0.5s \\ 
  		& (2,3) &	12	& -6.0937e-02  			&  7 &  0.7s	 & -6.0937e-02* 			&  6 &   0.6s \\ 
		& (3,3) &	18	& -6.0693e-02  			&  4 &  2.6s	 & -6.0693e-02* 			&  4 &   4.7s \\

\hline 
 P4\_8 	& (1,4) &	8	& -9.3381e-02* 			& 39 &  9.2s	 & -9.3355e-02  			& 15 &   1.7s \\ 
  		& (2,4) &	16	& -8.5813e-02*  			&  9 &  3.0s	 & -8.5813e-02  			&  4 &   1.3s \\ 
		& (3,4) &	24	& -8.5813e-02  			&  4 &  4.3s	 & -8.5814e-02* 			&  4 &   4.1s \\
\hline 
 P6\_2 	& (1,1) &	2	& \textbf{-5.7491e-01}  	&  1 &  0.2s	 & \textbf{-5.7491e-01}	&  1 &   0.3s\\ 
\hline 
 P6\_4 	& (1,2) &	4	& -5.7716e-01  			& 13 &  0.7s	 & -5.7716e-01  			& 4 &   0.4s\\ 
  		& (2,2) &	8	& -5.7696e-01  			&  4 &  4.4s	 & -5.7696e-01  			& 3 &   0.7s\\ 
  		& (3,2) &	12	& -5.7696e-01  			&  3 & 25.0s	 & -5.7765e-01* 			& 3 &  16.6s\\ 
\hline 
 P6\_6 	& (1,3) &	6	& -6.5972e-01*  			& 35 &  6.6s	 & -6.5972e-01  			&  7 &   2.7s\\ 
 		& (2,3) &	12	& -6.5972e-01*  			& 32 & 21.5s	 & -6.5972e-01  			&  4 &   4.4s\\ 
 		& (3,3) &	18	& \textbf{-4.1288e-01*}	&  1 & 44.5s & \textbf{-4.1288e-01*}	&  1 &  82.1s \\ 
\hline 
 P8\_2 	& (1,1) &	2	& \textbf{-5.7491e-01}  &  1 &  0.2s	 & \textbf{-5.7491e-01}	&  1 &   0.3s\\ 
\hline 
 P8\_4 	& (1,2) &	4	& -6.5946e-01  			& 21 &  1.5s	 & -6.5946e-01  			& 5 &   0.8s\\ 
  		& (2,2) &	8	& \textbf{-4.3603e-01*}	&  1 & 17.7s	 & \textbf{-4.3603e-01*}	& 1 &   2.7s\\ 
\hline 
 P10\_2 & (1,1) &	2	& \textbf{-5.7491e-01}	&  1 &  0.3s	 & \textbf{-5.7491e-01}	& 1 &   0.3s\\ 
\hline 
 P10\_4 & (1,2) &	4	& -6.5951e-01  			& 31 &  5.5s	 & -6.5951e-01  			& 6 &   2.2s \\ 
        & (2,2) &	8	& \textbf{-4.3603e-01*}	&  1 & 23.4s	 & \textbf{-4.3603e-01*}	& 1 &   8.4s\\ 
\hline 
 P20\_2 & (1,1) &	4	& \textbf{-5.7492e-01*} 	&  1 & 0.8s	 & \textbf{-5.7491e-01} 	& 1 &   0.4s\\ 
\hline 
\end{tabular}
\caption{Comparison BSOS vs. Sparse-BSOS on non sparse examples}
\label{tab:dense}
\end{table}

We see that Sparse-BSOS is able to solve the same problems as BSOS. As the problems are dense, Sparse-BSOS uses a trivial sparsity pattern and both 
certificates are the same. Consequently the optimal value coincides unless one of the SDPs stopped because of numerical issues. In most cases Sparse-BSOS is faster than BSOS.

\subsubsection{Sparse quadratic examples (medium and large scale)}
For the remaining examples in \S \ref{num-examples} we consider sparsity patterns having some banded structure. The patterns are described by a vector $\n\in\N^p$ and a natural number $\oo$. The vector $\n$ determines the size of the blocks $I_\ell$ whereas  $\oo$ defines the number of overlapping variables between two consecutive blocks. More formally defining $c_1:=n_1$ and $c_\ell := c_{\ell-1}+n_{\ell}-\oo$ we construct
\[
I_\ell := \{c_{\ell}-n_\ell+1,\ldots,c_\ell\}.
\]
Note that the total number of variables in pattern $I$ is $c_p$. We call those sparsity pattern banded, because the RIP (\see Assumption \ref{assumption1}) is satisfied by  
\[
\left(I_{\ell+1}\cap\bigcup_{r=1}^\ell I_r\right) \subseteq I_\ell.
\]
Informally for $\n$ we use notations like $(7\times 5)$ instead of $(5,5,5,5,5,5,5)$ or $(2\times 17,13)$ instead of $(17,17,13)$ without any misunderstanding. 

We also analyze the impact of different sparsity pattern on instances of the following sparse quadratic optimization problem. Given a sparsity pattern $I=\{I_1,\ldots,I_p\}$ we consider
\begin{equation}\label{eq:QP}
\min_\x\left\lbrace x^TAx + b^Tx\;:\;  1 - \sum_{i\in I_\ell}x_i^s\geq 0 \quad \ell = 1,\ldots,p,\quad x_i \geq 0 \quad i = 1,\ldots,n,\right\rbrace,
\tag{QP}
\end{equation}
where $b$ is a random vector and the symmetric matrix $A$ is randomly generated according to $I$\footnote{By this we means that a random value between $-1$ and $1$ is assigned to an entry $a_{ij}$ of $A$ if and only if both $i$ and $j$ are contained in the same $I_\ell$ for some $\ell$. Otherwise $a_{ij}=0$. The values of $b$ are randomly generated between $-1$ and $1$, too.}. We verify that $A$ has positive and negative eigenvalues to make sure, that our problem is non-convex. Depending on the choice of $s\in\{1,2\}$ we call the constraints \textit{linear} or \textit{quadratic}, although in the latter case we still have the linear constraints $x_i\geq 0$. Note that the second constraint implies that the first constraint is at most $1$ and vice versa.\footnote{Also note that Assumption 2 is fulfilled only when $s=2$. In the case of $s=1$ with the same arguments as in the first part of the proof of Theorem \ref{sosconvexproperty}, one can show that in this case with $d \geq 2$, the feasible set of the SDP \eqref{dual-aux_{1}} is compact and an optimal solution is attained. Furthermore, since the set $\K$ is a polyhedron, the quadratic modules associated to the $K_\ell$ are archimedean. One can adapt the proofs of Theorem \ref{sparse-stengle} and \ref{convergence} so that the convergence result is still true in the case $s=1$.} \label{sec:footnote}

\vspace{11pt}
\textbf{Table \ref{tab:qp1}:} To compare the respective SDPs arising from BSOS and Sparse-BSOS we fix $\n=(2\times 50)$ and create different sparsity patterns by varying $\oo$. From these patterns we generate instances of \eqref{eq:QP} with $s=2$. Choosing parameter $k=1$ for the size of the sum of squares we compute and solve the first relaxation $d=1$ of BSOS and Sparse-BSOS. As $\n$ is fixed for all examples the number of variables $n$ grows when the overlap $\oo$ decreases.

\begin{table}
\begin{tabular}{|l|c|r|c|c|r|r|r|}
\hline
$\oo/n$	&		& \# n-neg. 	&\# free  	& \# psd var.	& \# constr. & solution	& time \\
		&		&	var.		&	var.		& (size) 		&			 &			&		\\
\hline	
40/60 	& BSOS	& 125		& 1			& 1(61)			& 1891		&-1.1123e+01		& 13.8s\\
		& Sparse-BSOS	& 206		& 1723		& 2(51)			& 3513		&-1.1123e+01		& 14.0s\\
\hline
30/70 	& BSOS	& 145		& 1			& 1(71)			& 2556		&-1.2753e+01		& 24.3s\\
		& Sparse-BSOS	& 206		& 993		& 2(51)			& 3148		&-1.2753e+01		& 11.2s\\
\hline
20/80 	& BSOS	& 165		& 1			& 1(81)			& 3321 		&-1.3376e+01		& 48.1s\\
		& Sparse-BSOS	& 206		& 463		& 2(51)			& 2883		&-1.3376e+01		& 10.5s \\
\hline
10/90 	& BSOS	& 185		& 1			& 1(91)			& 4186		&-1.5406e+01		& 73.6s \\
		& Sparse-BSOS	& 206		& 133		& 2(51)			& 2718		&-1.5406e+01		&  9.3s \\
\hline
5/95 	& BSOS	& 195		& 1			& 1(96)			& 4656		&-1.5665e+01		& 89.5s\\
		& Sparse-BSOS	& 206		& 43			& 2(51)			& 2673		&-1.5665e+01		&  9.2s\\
\hline
1/99 	& BSOS	& 203		& 1			& 1(100)			& 5050		&-1.5658e+01	*	&152.2s \\
		& Sparse-BSOS	& 206		& 7			& 2(51)			& 2655		&-1.5658e+01		& 10.8s\\
\hline
\end{tabular}
\caption{QPI $\n=(50,50)$, quadratic constraints: $s=2$, maximal degree of the certificate $\dmax=2$, time to compute the first relaxation $d=1$ with $k=1$}
\label{tab:qp1}
\end{table}

Both BSOS and Sparse-BSOS are able to solve all instances of this problem and provide the same lower bounds. In contrast to the previous example the certificates and the corresponding SDP handed over to the solver are different:
Consider the example with $\oo=40$ and $n=60$ variables. As $k=1$ the psd variable in BSOS is of size $\binom{n+k}{k} = 61$ and grows with the number of variables. As the sparsity pattern in all examples consists of two blocks of $50$ variables, Sparse-BSOS always has $2$ psd variables of size $\binom{n_\ell+k}{k} = 51$, independently of the total number of variables. 
The unrestricted variable in BSOS corresponds to the optimizing variable $t$. Sparse-BSOS also has this optimizing variable. The other unrestricted variables correspond to the non-fixed coefficients of the polynomials $\fl$. This is easy to see in the case of $\oo=1$: The maximal degree of the certificate is $\dmax=2$. Hence, the non-fixed coefficients are the coefficients of the monomials $1, x_{50}$ and $x_{50}^2$. Consequently after removing the unrestricted variables for the fix coefficients, we have $3$ unrestricted variables for $f^1$ and $3$ for $f^2$. Together with the optimizing variable, we end up with $7$ unrestricted variables as presented in the table.
The non-negative variables in Table \ref{tab:qp1} correspond to the $\lambda_{\alpha\beta}$ in the description of BSOS and Sparse-BSOS. Note that the number of constraints for each block is $m_1=m_2=51$ for Sparse-BSOS and $m=n+2$ for BSOS (the linear constraints plus the two quadratic constraints). Consequently there are $206=\binom{2m_1+d}{d}+\binom{2m_2+d}{d}$ non-negative variables for Sparse-BSOS and $\binom{2(n+2)+d}{d}$ non-negative variables for BSOS depending on the number of variables $n$. 
The number of constraints in BSOS corresponds to the number of point evaluations needed to guarantee the equality constraint in the BSOS formulation, i.e $|\N^n_{\dmax}| = \binom{n+\dmax}{\dmax}$, and hence increases with $n$. For Sparse-BSOS three equalities have to be considered. As they are implemented by comparing coefficients we expect $|\mathcal{I}_1^{\dmax}|+|\mathcal{I}_2^{\dmax}|+|\Gamma_{\dmax}| = 2\times\binom{50+2}{2}+(2\times\binom{50+2}{2}-\binom{\oo+2}{2})$ many constraints. The difference comes from the fact that with every removed unrestricted variable, we also remove a constraint. 

Summarizing, when the overlap $\oo$ decreases Sparse-BSOS benefits from having less unrestricted variables and constraints while the size of the psd variables remains the same; the SDP becomes easier. In contrast to this in BSOS the size of the psd variable and the number of non-negative variables and constraints increases; the SDP becomes harder. The solving time reported in the last column of the table reflects this nearly perfectly.

\vspace{11pt}
\textbf{Table \ref{tab:qp2}:} We create the sparsity pattern $I$ with $\n=(11\times10)$ and $\oo=2$ and consider one instance of Problem \eqref{eq:QP} with $s=1$. According to the footnote in Section \ref{sec:footnote} we can guarantee the existence of a dual solution of Sparse-BSOS by choosing $d\geq 2$. Again, we choose parameter $k=1$ and compute the second BSOS and Sparse-BSOS relaxation $d=2$. For Sparse-BSOS we use different sparsity patterns and observe the effect on the SDP and the solving time.

\begin{table}
\begin{tabular}{|l|c|r|r|c|r|r|}
\hline
		&					& \# n-neg. 	&\# free  	& \# psd var.		& \# constr.	& time 	 \\
		&		$\n$			&	var.		&	var.		& (size) 			&			&		 \\
\hline	
BSOS & none					& 20706		& 1			& 1(91)				& 4186		&   882.4s\\
\hline
Sparse-BSOS& $(90)$			& 20706		& 2			& 1(91)				& 4187		&    49.0s\\
	 & $(50,42)$				& 11001		& 13			& 1(51)/1(43)		& 2278 		&    10.5s\\
	 & $(50,26,18)$			&  9072		& 24			& 1(51)/1(27)/1(19)	& 1905 		&     7.8s\\
	 & $(50,2\times18,10)$	&  8439		& 35			& 1(51)/2(19)/1(11)	& 1788 		&     6.7s\\	 	 
	 & $(50,5\times10)$		&  7821		& 57			& 1(51)/5(11)		& 1682 		&     6.6s\\
	 & $(2\times34,26)$		&  7776		& 24			& 2(35)/1(27)		& 1649 		&     5.3s\\
	 & $(3\times26,18)$		&  6171		& 35			& 3(27)/1(19)		& 1340 		&     3.2s\\	 
	 & $(34,3\times18,10)$	&  5862		& 46			& 1(35)/3(19)/1(11)	& 1287 		&     3.4s\\
	 &$(2\times26,2\times18,10)$	&  5538	& 46			& 2(27)/2(19)/1(11)	& 1223 		&     2.8s\\
	 & $(5\times18,10)$		&  4581		& 57			& 5(19)/1(11)		& 1042 		&     1.7s\\
	 & $(11\times10)$		&  3036		& 112		& 11(11)				&  777		&     0.8s\\
	 \hline
\end{tabular}
\caption{QPII $n=90$, overlap $2$, linear constraints: $s=1$, $(d,k)=(2,1)$, maximal degree of the certificate $\dmax=2$, same optimal solution verified by rank one condition in all cases}
\label{tab:qp2}

\end{table}

The dense and the sparse hierarchy are able to solve this sparse problem and certify optimality by the rank one condition. We do not repeat the discussion on the number of variables and constraints in this second example. We only remark that the additional unrestricted variable and constraints in in Sparse-BSOS for $\n=(90)$ compared to the BSOS case without sparsity pattern comes from the equality $\cf^1_{\boldsymbol{0}} = \cf_{\boldsymbol{0}}-t$. Because $t$ is variable $\cf^1_{\boldsymbol{0}}$ is not fixed and hence cannot be removed like all the other coefficients in this case (cf.\S\ref{sec:removevariables}).

The reason for the big difference of computing times between BSOS and Sparse-BSOS is double. On the one hand side, searching for linear dependent constraints in BSOS takes a lot of time. Generating the SDP with BSOS took over $70$ seconds whereas the SDP for Sparse-BSOS was generated in less than $5$ seconds. The main reason however is hidden in the constraints. Indeed the constraints in Sparse-BSOS are sparse whereas the constraints in BSOS are dense and therefore the SDP solver is much slower in the dense case.\footnote{Regarding this example the implementation of equalities using sampling might look questionable. Its positive effect comes into play when considering larger psd variables. In \cite{bsos} the authors were able to handle problems with psd variables of size up to $861$ ($n=40$, $k=2$) due to the special handling of the constraints associated to the psd variable in the case of sampling. This is by far out of the range of what can be done by comparing coefficients.} 
With regard to the computing times for Sparse-BSOS one can see that the size of the biggest psd variable is a more important factor than the number of non-negative and free variables or the number of constraints.

\vspace{11pt}
\textbf{Table \ref{tab:qp-ls}:} We next use the quadratic problem \eqref{eq:QP} a third time to investigate the range of Sparse-BSOS on large scale examples. In this sample we generate sparsity patterns with $400$ to $1000$ blocks of size $3$ to $9$ and small overlap between $1$ and $3$. As in the previous example we chose $s=1$ and $d=2$. The size of those examples is by far too large to run BSOS and so we only display the results obtained by Sparse-BSOS. They show that Sparse-BSOS is able to compute lower bounds for sparse large scale problems in reasonable time.

\begin{table}
\begin{tabular}{|r|c|c|r|r|c|r|r|r|}
\hline
$\n$		& $\oo$	& $n$ &\# n-neg.var.	&\# unrest.var. 	& \# psd var.(size) 	& \# const 	& rnk 	& time \\
\hline	
 100x4 	& 1		&  301	&   6 600	&     497		&  100( 5)			&  1 699		& 1	 	&   1.8s\\
 400x4	& 1		& 1201	&  26 400	&   1 997		&  400( 5)			&  6 799		& 1.03 	&  11.8s\\
 700x4	& 1		& 2102	&  46 200	&   3 497		&  700( 5)			& 11 899		& 1.02 	&  28.2s\\
1000x4	& 1		& 3001	&  66 000	&   4 997		& 1000( 5)			& 16 999		& 1.03 	&  49.1s\\
 \hline
 100x5 	& 2		&  302	&   9 100	&   1 091		&  100( 6)			&  2 596		& 1.06 	&   2.4s\\
 400x5	& 2		& 1202	&  36 400	&   4 391		&  400( 6)			& 10 396		& 1.04 	&  16.2s\\
 700x5	& 2		& 2102	&  63 700	&   7 691		&  700( 6)			& 18 196		& 1.10 	&  35.1s\\
1000x5	& 2		& 3002	&  91 000	&  10 991		& 1000( 6)			& 25 996		& 1.08 	&  74.8s\\
 \hline
  50x8 	& 2		&  302	&   9 500	&     541		&   50( 9)			&  2 496		& 1	 	&   4.3s\\
 200x8	& 2		& 1202	&  38 000	&   2 191		&  200( 9)			&  9 996		& 1.08 	&  14.9s\\
 350x8	& 2		& 2102	&  66 500	&   3 841		&  350( 9)			& 17 496 	& 1.01 	&  35.3s\\
 500x8	& 2		& 3002	&  95 000	&   5 491		&  500( 9)			& 24 996 	& 1.02 	&  68.7s\\
 \hline
  50x9 	& 3		&  303	&  11 550	&     933		&   50(10)			&  3 192		& 1.08 	&   3.2s\\
 200x9	& 3		& 1203	&  46 200	&   3 783		&  200(10)			& 12 792		& 1.07 	&  18.2s\\
 350x9	& 3		& 2103	&  80 850	&   6 633		&  350(10)			& 22 392		& 1.06 	&  44.6s\\
 500x9	& 3		& 3003	& 115 500	&   9 483		&  500(10)			& 31 992		& 1.04 	& 133.4s\\
 \hline
\end{tabular}
\caption{QPLS, linear constraints: $s=1$, $(d,k)=(2,1)$, maximal degree of the certificate $\dmax=2$}
\label{tab:qp-ls}
\end{table}

Summarizing the computational results of this section, we saw that Sparse-BSOS is competitive with the dense version on dense examples. On sparse examples the Sparse-BSOS outperforms BSOS as it can use the additional information. The advantage becomes bigger, when the block size $n_\ell$ is small with respect to the total number of variables $n$. In addition, the number of variables in the intersection of at least two blocks $I_\ell$ influences the performance of Sparse-BSOS.
Although the certificates depend on the information, known about the sparsity pattern, we did not encounter that the value computed by BSOS or by Sparse-BSOS with a coarse sparsity pattern, was better than the one computed with the actual pattern.

\subsection{Sparse-BSOS vs. Sparse-PUT} \label{sec:Sparse-PUT}
As already mentioned, the sparse version Sparse-PUT \cite{waki} of the standard hierarchy of SOS relaxations  has been proved to be efficient in solving several large scale problems; see for instance its successful application to some Optimal Power Flow problems \cite{molzahn,josz}. However there are a number of cases where only the first relaxation of Sparse-PUT can be implemented because the second one is too costly to implement. Also if $t:={\rm deg}(f)>2$ then the first SDP relaxation is already very expensive (or cannot be implemented) because some moment matrices of size ${n^*+t\choose t}\times{n^*+t\choose t}$ are constrained to be psd (where $n^*:=\max_\ell n_\ell$).

\subsubsection{Test problems from the literature}
In this section we present experiments on some test problems considered to be challenging in non-linear optimization. 
All test functions are sums of squares and share the global minimum $0$. Hence, it would be possible to compute the minimum in the unconstrained case. However, if not using constraints both Sparse-BSOS and Sparse-PUT reduce to searching for sums of squares. Hence, we restrict the problems to the set 
\[
\K = \{ x\in \R^n\;:\; 1-\sum_{i\in I_\ell}x_i\geq 0,\quad \ell=1,\ldots,p; \quad x_i\geq 0,\quad i=1,\ldots,n\},
\]
which depends on the sparsity pattern of the specific function. Consequently, the optimal values of the considered functions are strictly greater than zero, when the minimizer of the unconstrained problem is not in $\K$. We consider the following test functions of degree $4$:
\begin{itemize}
\item The \textit{Chained Wood Function}:
\[
\begin{split}
f:= \sum_{j\in H} \left(100(x_{j+1}-x_{j}^2)^2 + (1-x_{j})^2 + 90(x_{j+3}-x_{j+2}^2)^2 \right.\\
\left.+ (1-x_{j+2})^2 + 10(x_{j+1}+x_{j+3}-2)^2 + 0.1(x_{j+1}-x_{j+3})^2\right)
\end{split}
\]
where $H := \{2i-1 \, :\, i = 1,\ldots,n/2-1\}$ and $n\equiv 0 \mod 4$. The sparsity pattern is given by $\n=(p\times4)$ and $\oo = 2$.
\item The \textit{Chained Singular Function}:
\[
f:= \sum_{j\in H}
\left((x_j+10x_{j+1})^2 + 5(x_{j+2}-x_{j+3})^2  + (x_{j+1}-2x_{j+2})^4 + 10(x_{j}-x_{j+3})^4\right)
\]
where $H := \{2i-1 \, :\, i = 1,\ldots,n/2-1\}$ and $n\equiv 0 \mod 4$. The sparsity pattern is given by $\n=$ $(p\times4)$ and $\oo = 2$.
\item The \textit{Generalized Rosenbrock Function}:
\[
f:= \sum_{i = 2}^n\left(100(x_i-x_{i-1}^2)^2+(1-x_i)^2\right).
\]
The sparsity pattern is given by $\n= (p\times 2)$ and $\oo = 1$.
\end{itemize}

\vspace{11pt}
\textbf{Table \ref{tab:chainwood} \& \ref{tab:chainsing}:} We solve the Chained Wood and the Chained Singular Function for $n=500,\ldots,1000$ with Sparse-BSOS and Sparse-PUT. For Sparse-BSOS we fix $k=2$ and compute the first and the second relaxation. For Sparse-PUT the relaxation $d=1$ is infeasible, as the degree of the certificate is at most $2$ but the functions are of degree $4$. Hence the first feasible relaxation for Sparse-PUT is $d=2$. When reading the tables note that the reported rank is the average of the rank of several matrices and hence is not necessarily an integer.

\begin{table} 
\begin{tabular}{|l|c||crr||crr||crr|} 
\hline 
				&     	& \multicolumn{3}{c||}{Sparse-BSOS}& \multicolumn{3}{c||}{Sparse-PUT}\\
ChainedWood	&  rel. & solution     	& rk & time 	& solution     & rk & time\\	
\hline 
$n=500$ 	& $d=1$ &\textbf{3.8394e+03}*	&1	& 16.7s	&	inf				& - & - \\
		& $d=2$ &\textbf{3.8394e+03}	 	&1  & 10.4s	&\textbf{3.8394e+03}	& 1 & 16.7s	\\
\hline 
$n=600$ 	& $d=1$ &\textbf{4.6104e+03}*	&1& 20.6s	&	inf				& - & - \\
		& $d=2$ &\textbf{4.6104e+03}		&1  & 13.2s	&\textbf{4.6104e+03}	& 1 & 21.0s	\\
\hline 	
$n=700$ 	& $d=1$ &\textbf{5.3813e+03}*	&1  & 24.1s	&	inf				& - & - \\
		& $d=2$ &\textbf{5.3813e+03}		&1  & 15.8s	&\textbf{5.3813e+03}	& 1 & 26.0s	\\
\hline 	
$n=800$ 	& $d=1$ &\textbf{6.1523e+03}*	&1	& 27.3s	&	inf				& - & - \\
		& $d=2$ &\textbf{6.1523e+03}		&1  & 19.4s	&\textbf{6.1523e+03}	& 1 & 31.1s	\\
\hline 	
$n=900$ 	& $d=1$ &\textbf{6.9232e+03}*	&1	& 30.8s	&	inf				& - & - \\
		& $d=2$ &\textbf{6.9232e+03}		&1  & 22.3s	&\textbf{6.9232e+03}	& 1 	& 36.5s	\\
\hline 		
$n=1000$	& $d=1$ &	     7.6942e+03*		&3	& 28.6s	&	inf				& - 	& - \\
		& $d=2$ &\textbf{7.6942e+03}		&1  & 26.1s	&\textbf{7.6942e+03}	& 1 	& 42.3s	\\
\hline 				
\end{tabular}
\caption{Comparison Sparse-BSOS ($k=2$) and Sparse-PUT on the Chained Wood Function}
\label{tab:chainwood}
\end{table}

\begin{table} 
\begin{tabular}{|l|c||crr||crr||crr|} 
\hline 
				&     	& \multicolumn{3}{c||}{Sparse-BSOS}& \multicolumn{3}{c||}{Sparse-PUT}\\
ChainedSingular	&  rel. & solution     & rk & time 	& solution     & rk & time\\	
\hline 
$n=500$ 	& $d=1$ &-1.4485e-02* 			&1.0& 19.6s	&	inf		&	- & - \\
		& $d=2$ &\textbf{-9.7833e-10}  	&1  & 17.8s	&\textbf{-2.0271e-10 } & 1 & 22.6s	\\
\hline 
$n=600$ 	& $d=1$ &-2.7372e-03* 			&1.0& 40.1s	&	inf		&	- & - \\
		& $d=2$ &\textbf{-1.2640e-09}  	&1  & 21.4s	&\textbf{-1.9613e-10}	& 1 & 27.8s	\\
\hline 	
$n=700$ 	& $d=1$ &-1.7548e-03* 			&1.0& 41.6s	&	inf		&	- & - \\
		& $d=2$ &\textbf{-1.7613e-09}  	&1  & 25.3s	& \textbf{-2.4628e-10}	& 1 & 34.1s	\\
\hline 	
$n=800$ 	& $d=1$ &-1.9438e-03* 			&1.0& 58.9s	&	inf		&	- & - \\
		& $d=2$ &\textbf{2.1935e-09}  	&1  & 29.0s	&\textbf{-2.3398e-10}	& 1 & 41.0s	\\
\hline 	
$n=900$ 	& $d=1$ &-1.8924e-02* 			&1.0& 43.5s	&	inf		&	- & - \\
		& $d=2$ &\textbf{-2.6072e-09}  	&1  & 33.5s	& \textbf{-3.5871e-10}	& 1 & 47.3s	\\
\hline 		
$n=1000$	& $d=1$ &-4.4914e-02* 			&1.0& 35.5s	&	inf		&	- & - \\
		& $d=2$ &\textbf{-9.3508e-10}  	&1  & 39.5s	& \textbf{-1.7329e-10}	& 1 & 54.9s	\\
\hline 				
\end{tabular}
\caption{Comparison Sparse-BSOS ($k=2$) and Sparse-PUT on the Chained Singular Function}
\label{tab:chainsing}
\end{table}

For $d=2$ both Sparse-BSOS and Sparse-PUT are able to find and certify the optimal value (up to numerical errors) for both functions. In these examples Sparse-PUT is slower than Sparse-BSOS because for every constraint Sparse-PUT introduces a psd variable of size $5\times 5$ corresponding to a sum of square of degree $2$. Sparse-BSOS only introduces a non-negative variable for all products and squares of constraints. As the number of non-negative variables is not too big, Sparse-BSOS beats Sparse-PUT.

At this point we noticed a rather strange phenomenon. For the first Sparse-BSOS relaxation $d=1$, the SDP solver runs into numerical problems. Yet, from the definition of the Chained Functions we know that they are sum of squares of degree $4$, which in principle Sparse-BSOS is able to represent with $k=2$ for any $d$. One possible explanation is that the solver may be confused by the additional non-negative variables. However, we could reproduce the same behaviour when omitting the constraints and explicitly only searching for a sum of squares. This phenomena is not specific to our implementation or to SDPT3. It also occurs when searching for a sum of squares representation of the Chained Wood Function ($n=4$) with Gloptipoly3 \cite{gloptipoly} using SeDuMi1.3\cite{sedumi} and with Yalmip\cite{yalmip} using Mosek7\cite{mosek}.

\vspace{11pt}
\textbf{Table \ref{tab:rosenbrock}:} Solving the Generalized Rosenbrock Function for $n=100,\ldots,600$ with Sparse-BSOS and Sparse-PUT. As in the previous examples for Sparse-BSOS we fix $k=2$ and compute the first and the second relaxation. Again for Sparse-PUT the relaxation $d=1$ is infeasible. Hence we start the Sparse-PUT relaxation with $d=2$.

\begin{table} 
\begin{tabular}{|l|c||crr||crr||crr|} 
\hline 
				&     	& \multicolumn{3}{c||}{Sparse-BSOS}& \multicolumn{3}{c||}{Sparse-PUT}\\
GeneralizedRosenbrock	&  rel. & solution     & rk & time 	& solution     & rk & time\\	
\hline 
$n=100$ 	& $d=1$ & 4.8496e+01 &2.0&    2.8s	& 	inf		&	- & - \\
		& $d=2$ & 9.6145e+01 &2.2&    1.7s	&  \textbf{9.6197e+01}  & 1 &  2.4s	\\
		& $d=3$ & 9.6184e+01 &2.1&    4.5s	&   -	   	& - &  -	\\ 
		& $d=4$ & 9.6195e+01*&1.3&   18.2s	&   -	   	& - &  -	\\ 
\hline 
$n=200$ 	& $d=1$ & 9.7496e+01 &2.0&    2.7s	& 	inf		&	- & - \\
		& $d=2$ & 1.9512e+02 &2.1&    3.2s	&  \textbf{1.9519e+02} & 1 &  4.6s	\\
		& $d=3$ & 1.9516e+02 &2.1&    5.9s	&   -	   	& - &  -	\\ 
		& $d=4$ & 1.9395e+02*&3  &  565.6s	&   -	   	& - &  -	\\ 
\hline 	
$n=300$ 	& $d=1$ & 1.4650e+02 &2.0&    3.9s	& 	inf		&	- & - \\
		& $d=2$ & 2.9410e+02 &2.1&    4.8s	&  \textbf{2.9418e+02} & 1 &  6.9s	\\
		& $d=3$ & 2.9414e+02 &2.0&    9.3s	&   -	   	& - &  -	\\ 
		& $d=4$ & 2.9176e+02*&3  &  695.6s	&   -	   	& - &  -	\\ 
\hline 	
$n=400$ 	& $d=1$ & 1.9550e+02 &2.0&    5.2s	& 	inf		&	- & - \\
		& $d=2$ & 3.9308e+02 &2.1&    6.5s	&  \textbf{3.9317e+02} & 1 &  9.4s	\\
		& $d=3$ & 3.9312e+02*&2.0&   27.6s	&   -	   	& - &  -	\\ 
		& $d=4$ &-8.1403e+05*&3  &  801.9s	&   -	   	& - &  -	\\ 
\hline 	
$n=500$ 	& $d=1$ & 2.4450e+02 &2.0&    6.8s	& 	inf		&	- & - \\
		& $d=2$ & 4.9206e+02 &2.0&    8.3s	&  \textbf{4.9216e+02} & 1 &  12.4s	\\ 
		& $d=3$ & 4.9210e+02*&2.0&   31.8s	&   -	   	& - &  -	\\ 
		& $d=4$ & 4.9215e+02*&3  & 1144.6s	&   -	   	& - &  -	\\ 
		\hline 	
$n=600$ 	& $d=1$ & 2.9350e+02 &2.0&    8.1s	& 	inf		&	- & - \\
		& $d=2$ & 5.9104e+02 &2.0&   10.4s	& \textbf{5.9115e+02} & 1 &  15.5s	\\
		& $d=3$ & 5.9108e+02*&2.0&   22.3s	&   -	   	& - &  -	\\ 
		& $d=4$ & 5.9114e+02*&1.0&  111.6s	&   -	   	& - &  -	\\ 
\hline 					
\end{tabular}		
\caption{Comparison Sparse-BSOS ($k=2$) and Sparse-PUT on the Generalized Rosenbrock Function}
\label{tab:rosenbrock}
\end{table}

As in the previous examples we find a unique minimizer with Sparse-PUT, certified by the rank condition. This time Sparse-BSOS is not able to find the optimum even when going up to the relaxation $d=4$. However, even though Sparse-BSOS does not obtain the optimal value at an early relaxation, its optimal value at step $d=2$ is already in the right order of magnitude and can be computed faster than the optimal value provided by Sparse-PUT.

Note that for $n= 100$ the relaxation $d=1$ is slower than the relaxation $d=2$. The same happens in Table \ref{tab:broyban} for some values of $n$. This is an issue related to our configuration and could not be reproduced when using another SDP solver or another operating system, respectively. 

To close this section we consider the following test functions of degree $6$:
\begin{itemize}
\item The \textit{Discrete Boundary Value Function}: 
\[
f:= \sum_{i=1}^n(2x_i-x_{i-1}-x_{i+1}+\frac{1}{2}h^2(x_{i} + ih+1)^3)^2,
\]
where $h := \frac{1}{n+1}$,$ x_0:=9=:x_{n+1}$. The sparsity pattern is $\n = (p\times 3)$ and $\oo= 2$.
\item The \textit{Broyden Banded Function}:
\[
f:= \sum_{i=1}^n\left( x_{i}(2 + 10 x_{i}^2) + 1 - \sum_{j \in H_i}(1 + x_{j})x_{j}\right)^2,
\]
where $H_i := \{j : j \neq i, \max(1,i-5)\leq j \leq \min(n,i+1) \}$. The sparsity pattern is $\n = (p\times 7)$ and $\oo= 6$.
\end{itemize}

\textbf{Table \ref{tab:discbound}:} Solving the Discrete Boundary Value Function for $n=15,\ldots,35$. The first possible relaxation for Sparse-PUT is $d=3$. For Sparse-BSOS we choose $k=3$ and compute the first relaxations until we get the certified optimal value. 

\begin{table} 
\begin{tabular}{|l|ccrr||ccrr||} 
\hline 
				&      \multicolumn{4}{c||}{Sparse-BSOS}& \multicolumn{4}{c||}{Sparse-PUT}\\
DiscreteBoundary	&  rel. & solution     	& rk & time 	&  rel. & solution     & rk & time\\	
\hline 
$n=15$ 	& $d=1$ & \textbf{9.8705e-04} 	&1	&  1.4s	& $d=3$& \textbf{9.8705e-04} & 1 &  2.0s	\\
\hline 
$n=20$ 	& $d=1$ & \textbf{4.4893e-04} 	&1  &  1.8s	& $d=3$& \textbf{4.4893e-04} & 1 &  2.6s	\\
\hline 	
$n=25$ 	& $d=1$ & \textbf{2.4060e-04} 	&1	&  2.3s	& $d=3$& \textbf{2.4060e-04} & 1 &  3.3s	\\
\hline 	
$n=30$ 	& $d=1$ & 1.4358e-04 			&2.1&  2.7s	& $d=1$&   inf		& - &  -	\\ 
		& $d=2$ & 1.4359e-04 			&1.1	&  3.2s	& $d=2$&   inf		& - &  -	\\ 
		& $d=3$ & \textbf{1.4358e-04} 	&1  	&  4.0s	& $d=3$& \textbf{1.4359e-04}& 1 &  3.9s	\\
\hline 	
$n=35$ 	& $d=1$ & 9.2438e-05 			&4  	&  3.9s	& $d=1$&   inf		& - &  -	\\ 
		& $d=2$ & 9.2438e-05* 			&3.8	&  4.3s	& $d=2$&   inf		& - &  -	\\ 
		& $d=3$ & \textbf{9.2439e-05} 	&1  	&  4.8s	& $d=3$& \textbf{9.2441e-05}& 1 &  4.5s	\\\hline 					
\end{tabular}		
\caption{Comparison Sparse-BSOS ($k=3$) and Sparse-PUT on the Discrete Boundary Value Function}
\label{tab:discbound}
\end{table}

Note that Sparse-BSOS and Sparse-PUT certify different optimal values in the case $n=30$ and $n=35$ and that in contrast to the theory, the series of lower bounds computed by Sparse-BSOS for $n=35$ is not monotonously increasing. The difference however is less than $10^{-8}$ and can be considered to be zero \enquote{numerically}. As for the Chained Functions in Table \ref{tab:chainwood} \& \ref{tab:chainsing} both Sparse-BSOS and Sparse-PUT are able to certify the minimum in all cases. When Sparse-BSOS succeeds to do so at an early step of the relaxation it is faster and if not then it takes approximately the same time.

\vspace{11pt}
\textbf{Table \ref{tab:broyban}:} Solving the Broyden Banded Function for $n=7,\ldots,15$. The first possible relaxation for Sparse-PUT is $d=3$. For Sparse-BSOS we let $k=3$ and compute the first relaxations.

\begin{table} 
\begin{tabular}{|l|c||crr||crr||crr|} 
\hline 
				&     	& \multicolumn{3}{c||}{Sparse-BSOS}& \multicolumn{3}{c||}{Sparse-PUT}\\
BroydenBanded	&  rel. & solution     & rk & time 	& solution     & rk & time\\	
\hline 
$n=7$ 	& $d=1$ & 2.1371 &2  &  11.5s	&   inf & - &  -	\\
		& $d=2$ & 2.7522 &2  &   9.2s	&   inf & - &  -	\\
		& $d=3$ & 3.1161 &2  &  11.0s	& \textbf{3.4233} & 1 &  15.2s	\\
\hline 
$n=9$ 	& $d=1$ & 2.2171 &3  &  77.0s	&   inf & - &  -	\\
		& $d=2$ & 2.8313 &3  &  72.7s	&   inf & - &  -	\\
		& $d=3$ & 3.1354 &3  &  87.1s	& \textbf{3.3941} & 1 &  105.6s	\\
\hline 	
$n=11$ 	& $d=1$ & 2.2968 &3  & 160.3s	&   inf & - &  -	\\
		& $d=2$ & 3.0108 &2  & 159.7s	&   inf & - &  -	\\
		& $d=3$ & 3.2638 &4  & 190.7s	& \textbf{3.3924 }& 1 &  215.1s	\\
\hline 	
$n=13$ 	& $d=1$ & 2.3353 &3  & 282.9s	&   inf & - &  -	\\
		& $d=2$ & 3.0963 &2.9& 301.6s	&   inf & - &  -	\\
		& $d=3$ & 3.3268	 &4.4& 367.5s	& \textbf{3.4120} & 1 &  357.7s	\\
\hline 	
$n=15$ 	& $d=1$ & 2.3555 &3  & 445.2s	&   inf & - &  -	\\
		& $d=2$ & 3.1514 &3.8& 466.3s	&   inf & - &  -	\\
		& $d=3$ & 3.3617 &3.8& 509.1s	& \textbf{3.4243} & 1 &  545.2s	\\
\hline 					
\end{tabular}
\caption{Comparison Sparse-BSOS ($k=3$) and Sparse-PUT on the Broyden Banded Function}
\label{tab:broyban}
\end{table}

As for the Generalized Rosenbrock Function (Table \ref{tab:rosenbrock}) Sparse-PUT is able to find and certify the optimal solution at the first possible relaxation step, whereas Sparse-BSOS does not succeed to do so in the first three steps. However up to relaxation order $d=3$ Sparse-BSOS is faster than Sparse-PUT and hence provides lower bounds for the objective function in less time and reasonably close to the optimal value. 

\subsubsection{Random medium scale quadratic and quartic test problems}

So far we have compared Sparse-BSOS and Sparse-PUT on examples where the first possible relaxation of Sparse-PUT is exact. We now present examples where this is not the case or the first relaxation cannot even be computed because it is already to large. To this end we choose sparsity patterns with $40$ to $80$ variables in blocks of $10$ to $40$ and fixed overlap $\oo=5$. Note that the crucial parameter for the sparse hierarchies is not so much the total number of variables but rather the maximum block size of the sparsity pattern.

We change Problem \eqref{eq:QP} slightly to generate the following sample of problems. Given a sparsity pattern $I=\{I_1,\ldots,I_p\}$ we now consider:

\begin{equation}\label{eq:QP1}
\min_\x\left\lbrace \sum_{i=1}^na_ix^4 + x^TAx + b^Tx\;:\;  1 - x_i^2\geq 0, \quad x_i \geq 0 \quad i = 1,\ldots,n,\right\rbrace,
\tag{QP'}
\end{equation}
where $b$ is a random vector and the symmetric matrix $A$ is randomly generate according to $I$. 
We verify that $A$ has positive and negative eigenvalues to make sure that our problem is non-convex again. When $a=0\in\R^n$ we refer to \eqref{eq:QP1} as a \textit{quadratic} problem. When mentioning the \textit{quartic} problem (\ref{eq:QP1}), it means that we chose $a$ randomly in $[-1,1]^n$.

\vspace{11pt}
\textbf{Table \ref{tab:QP_quad}:} We consider the quadratic instance of Problem \eqref{eq:QP1} and compute, whenever possible, the first two relaxations of BSOS, Sparse-BSOS and Sparse-PUT.

\begin{sidewaystable}
\begin{tabular}{|c|c|c||c|c|r||c|c|r||c|c|r||}
\hline
\eqref{eq:QP1}		&		&     	& \multicolumn{3}{c||}{BSOS} &  \multicolumn{3}{c||}{Sparse-BSOS}	& \multicolumn{3}{c||}{Sparse-PUT}\\
Quadratic			& $n$ 	&rel. 	& solution     	& rk & time 	& solution     	& rk & time		& solution     	& rk & time 	\\	
\hline
$(7\times10)$		&$ 40$	&$d=1$	& -1.2496e+02 	& 4 &	3.5s		&-1.2496e+02			& 4.0& 	1.8s		&	-1.2496e+02		& 4.0& 	 0.8	s\\
					&		&$d=2$	& -4.4436e+01* 	&15 & 574.4s		&-4.4457e+01			& 3.9& 	8.8s		&\textbf{4.4326e+01}	& 1  & 	45.8s\\	
\hline
$(2\times20,10)$		&$ 40$	&$d=1$	& -1.6197e+02 	& 5 &   3.4s		&-1.6197e+02			& 5.0&	0.7s		&	-1.6197e+02 		& 5.0& 	 0.7	s\\
					&		&$d=2$	& -5.9412e+01*	&16 & 592.9s		&-5.9447e+01			& 9.3&  6.5s		&		-			& -  &	-	\\
\hline
$(15\times10)$		&$ 80$	&$d=1$	& -2.7552e+02* 	&  8&  71.7s		&-2.7557e+02			&4.0 & 	0.8s		&	-2.7557e+02 		& 4.0& 	 0.8	s\\
					&		&$d=2$	&		-		& -	&	-		&-1.0837e+02			&2.4 &	2.7s		&\textbf{1.0825e+02}	& 1  & 143.7s	\\
\hline
$(5\times20)$		&$ 80$	&$d=1$	& -3.4782e+02*	&  5&  86.6s		&-3.4782e+02   		& 5.0&	1.6s		& 	-3.4782e+02 		& 5.0 &	 1.6s\\
					&		&$d=2$	&		-		&  -& -			&-1.2536e+02 		&9.0 & 26.7s		&		-			&	-&	-	\\	
\hline
$(2\times40,10)$		&$ 80$	&$d=1$	& -4.8983e+02*	&  5&  69.3s		&-4.8988e+02 		&5.0 &	3.9s		&	-4.8988e+02 		& 5.0& 	 4.2	s\\
					&		&$d=2$	&		-		& -	&	-		&\textbf{-1.8564e+02}&1	 & 66.8s		&		-			&	-&	-	\\
\hline	
$(23\times10)$		&$120$	&$d=1$	& -3.8765e+02* 	&  8& 849.2s		&-3.8765e+02 		& 4.0& 	0.9s		& 	-3.8765e+02 		& 4.0 &	 1.0	s\\
					&		&$d=2$	&		-		&-	& -			&-1.5884e+02 		& 2.2& 	4.6s		&		-			& -	  &	-	\\
\hline
$(7\times20,15)$		&$120$	&$d=1$	& -5.4921e+02* 	&  5& 772.0s		&-5.4920e+02  		& 4.6& 	3.6s		&	-5.4920e+02 		& 4.6 & 	 3.8	s\\
					&		&$d=2$	&	-			&-	& -			&-2.3846e+02* 		& 6.5& 85.2s		&		-			& -	  &	-	\\
\hline
$(3\times40,15)$		&$120$	&$d=1$	& -7.1721e+02 * 	&  6& 581.0s		&-7.1720e+02 		&6.0 &	9.8s		&	-7.1720e+02 		& 6.0 &	11.1s\\	
					&		&$d=2$	&		-		&-	& -			&-2.3079e+02  		&12.2&143.8s		&		-			& -	  &	-	\\
\hline
\end{tabular}
\caption{ Comparison BSOS($k=1$), Sparse-BSOS($k=1$), and Sparse-PUT on Quadratic Problem \eqref{eq:QP1}, $\oo=5$}
\label{tab:QP_quad}
\end{sidewaystable}

We were able to solve the first relaxation for all algorithms. However, as BSOS cannot benefit from the sparsity structure, it runs into numerical problems, in particular for the examples with $n=120$ variables. Note that in \cite{bsos} problems comparable to this one have been solved by BSOS. There the authors were able to solve the second relaxation for a quadratic problem with $100$ variables. Indeed we were able to compute the second relaxation for some examples with $80$ variables. However, this depends strongly on which SDP constraints BSOS deletes before handing over the system to the solver. We decided not to search for examples where we can compute the second relaxation.

When the solver does not run into numerical problems all solutions of the first relaxations coincide. This is because of the degree of the constraints and the objective function, the first relaxations are more or less the same. In fact the sum of squares weights of the constraints in Sparse-BSOS are all of degree $0$, i.e. they are non-negative scalar variables (and are implemented as such). BSOS and Sparse-BSOS use twice as many constraints because they not only consider the constraints $g_j(x)\geq0$ but also the constraints $g_j(x)\leq1$. This explains why Sparse-PUT is faster for the first relaxation. 

In a number of cases the second relaxation of BSOS and Sparse-PUT could not be implemented because the psd variables become to big for the solver. Sparse-PUT is able to solve the second relaxation in two cases where the block size is $10$ and we could actually certify optimality by the rank condition. The examples with same block size but $n=120$ variables could not be solved because the solver runs out of memory. The same happened for examples with larger block size.

Only Sparse-BSOS was able to compute the second relaxation in all cases. Of course this second relaxation is weaker than the second relaxation of Sparse-PUT and Sparse-BSOS could only certify optimality in one case. However, when comparing the results with the certified values from Sparse-PUT, we see that they are actually quite close and much less time was spent to compute them. In all cases where Sparse-PUT could not solve the second relaxation, Sparse-BSOS could provide a lower bound that is much better that than the one provided by the the first relaxation of Sparse-PUT.

\vspace{11pt}
\textbf{Table \ref{tab:QP_quat}:} The quartic version ($a\neq0$) of Problem \eqref{eq:QP1}. As the degree of the objective function is now $4$, the first relaxation of Sparse-PUT (i.e. with $d=1$) is infeasible. Choosing $k=1$ as fixed parameter, the respective relaxations with $d=1$ of BSOS and Sparse-BSOS are not feasible either, and therefore one computes the second and the third relaxation, whenever possible.

\begin{sidewaystable}
\begin{tabular}{|c|c|c||c|c|l||c|c|l||c|c|l||}
\hline
\eqref{eq:QP1}		&		&     	& \multicolumn{3}{c||}{BSOS} &  \multicolumn{3}{c||}{Sparse-BSOS}	& \multicolumn{3}{c||}{Sparse-PUT}\\
Quartic				& $n$ 	&rel. 	& solution     	& rk & time 	& solution     	& rk & time		& solution     	& rk & time 	\\	
\hline
$(7\times10)	$		&$ 40$	&$d=2$	& -5.4736e+01*	&16	 &466.3s	&-5.4802e+01		&6.0&	  8.9s	&\textbf{-5.2576e+01} 	& 1 & 49.6s	\\	
					&		&$d=3$	&		 - 		& - 	 & - 	&-5.3047e+01*	&3.1&	319.0s	&		 - 		& - 	&	 - 	\\
\hline
$(2\times20,10)$		&$ 40$	&$d=2$	& -6.4481e+01*  &17	 &606.3s	&-6.4528e+01 	&8.7&	  5.9s	&		 - 		& - 	&	 - 	\\
\hline
$(15\times10)$		&$ 80$	&$d=2$	&		 - 		& - 	 &	 - 	&-1.2037e+02 	&3.8&	  3.2s	&\textbf{-1.1897e+02}	& 1 	& 150.1s 	\\
					&		&$d=3$	&		 - 		& - 	 & - 	&-1.1950e+02 	&1.7&	 37.9s	&		 - 		& - 	&	 - 	\\
\hline
$(5\times20)$		&$ 80$	&$d=2$	&		 - 		& - 	 &	 - 	&-1.2725e+02 	&8.4&	 23.5s	&		 -  		& - 	&	 - 	\\	
\hline
$(2\times40,10)$		&$ 80$	&$d=2$	&		 - 		& - 	 &	 - 	&-1.9476e+02 	&12.3&   78.7s	&		 - 		& - 	&	 - 	\\
\hline	
$(23\times10)$		&$120$	&$d=2$	&		 - 		& - 	 &	 - 	&-1.6791e+02 	&4.0	 & 	  8.7s	& 		-		& - 	&	 - 	\\
					&		&$d=3$	&		 - 		& - 	 &	 - 	&-1.6375e+02 	&1.2 &	103.0s	&		 - 		& - 	&	 - 	\\
\hline
$(7\times20,15)$		&$120$	&$d=2$	&		 - 		& - 	 &	 - 	&2.1229e+02 		&9.5	 & 	 54.7s	&		 - 		& - 	&	 - 	\\
\hline
$(3\times40,15)$		&$120$	&$d=2$	&		 - 		& - 	 &	 - 	&-2.3994e+02 	&11.8&	137.1s	&		 - 		& - 	&	 - 	\\
\hline
\end{tabular}
\caption{ Comparison BSOS($k=1$), Sparse-BSOS($k=1$), and Sparse-PUT on Quartic Problem \eqref{eq:QP1}, $\oo=5$}
\label{tab:QP_quat}
\end{sidewaystable}

Sparse-PUT could solve the second relaxation only for the patterns $(7\times10)$ and $(15\times10)$ in which case the optimal solution was attained and certified. 
With $k=1$ (i.e. the SOS in \eqref{aux_{1}_{1}} is of degree at most $2$ and so less than the degree ($=4$) of the objective function) one still may solve higher relaxations with the Sparse-BSOS hierarchy. In contrast to the previous example the first relaxations of Sparse-BSOS and Sparse-PUT do not yield the same values. Indeed the resulting relaxations are actually different because the sum of squares in Sparse-PUT for $d=2$ has degree $4$ whereas in (Sparse-)BSOS it is of degree $2$. Also in the sparse positivity certificate of Sparse-PUT, the SOS weights of the constraints are of degree $2$ whereas in (S)BSOS the weights associated with the products of constraints are non-negative scalars. Note however that the values of the second relaxation of Sparse-BSOS for the patterns $(7\times10)$ and $(15\times10)$ are not so far from being optimal, which suggests that for the other test problems they are not so bad either. In any case Sparse-BSOS is able to provide lower bounds for larger block size, whereas the other hierarchies already overpassed their limit.

\section{Conclusion}
We have provided a sparse version of the BSOS hierarchy \cite{bsos} so as  to handle large scale polynomial optimization problems that satisfy a structured sparsity pattern. The positivity certificates used in the sparse BSOS hierarchy are coming from a sparse version of Krivine-Stengle Positivstellensatz, also proved in this paper. 

We have tested the  Sparse-BSOS hierarchy on a sample  of non-convex problems randomly generated as well as on some typical examples from the literature. The results show that the hierarchy is able to solve small scale dense and large scale sparse polynomial optimization problems in reasonable computational time. In all our experiments where the problem size allowed to compare the dense and sparse versions, finite convergence in the latter took place whenever it took place for the former and moreover at the same relaxation order. This is remarkable, since in principle convergence of the dense version is at least faster than convergence for the sparse version.

In principle the sparse version \cite{waki} of the standard SOS hierarchy is hard to beat as it has proved to be efficient and successful in a number of cases. However in many cases the first relaxation of the Sparse-BSOS hierarchy is (provably) at least as good as the first relaxation of the former
(e.g. for quadratic/quadratically constrained problems), and it can also provide better lower bounds in cases where the former cannot solve the first or second relaxation because the size the semidefinite constraint is till too large for the solver. (In this case we take full advantage of the fact that in Sparse-BSOS relaxations, the size of the semidefinite constraint is chosen and fixed.) We have seen some such examples. This is particularly interesting as it could help solve hard MINLP problems by Branch \& Bound methods where for efficiency one needs to compute good quality lower bounds at each node of the search tree (and one does not need to provide the exact optimal value).

Crucial in our implementation is the comparison of coefficients to state that two polynomials are identical (instead of checking their values on a sample of generic points). This limits the application to problems  with polynomials of small degree (say less than $4$). In particular if some degree in
the problem data is at least $6$ and the block size is not small, the resulting SDP can become ill-conditioned when the relaxation order increases. Depending on the context in which one wants to use the sparse hierarchy, an alternative may be to implement polynomial identities by sampling.
	
\section*{Acknowledgement}
The first author is very thankful to the National University of Singapore and Professor Kim-Chuan Toh for their hospitality and financial support during his stay in Singapore. 

\end{document}